\documentclass[a4paper,11pt]{article}
\title{Substitution and $\chi$-Boundedness}
\author{Maria Chudnovsky\thanks{Columbia University, New York, NY 10027, USA. E-mail: mchudnov@columbia.edu. Partially supported by NSF grants DMS-0758364 and DMS-1001091.}~, Irena Penev\thanks{Park Tudor School, Indianapolis, IN 46240, USA. E-mail: ip2158@caa.columbia.edu. This research was conducted while the author was a graduate student at Columbia University.}~, 
Alex Scott\thanks{Mathematical Institute, University of Oxford, 24-29 St Giles', Oxford OX1 3LB, UK.
E-mail: scott@maths.ox.ac.uk.}~, Nicolas Trotignon\thanks{CNRS, LIP, ENS Lyon, Universit\'e de Lyon, INRIA\ email:
    nicolas.trotignon@ens-lyon.fr. Partially supported by the French \emph{Agence Nationale de la
Recherche} under reference \textsc{anr-10-jcjc-Heredia.}}} 
\date{February 1, 2013} 
\usepackage{amsthm}
\usepackage{amssymb}
\usepackage{amsbsy}
\usepackage{mathrsfs} 
\newtheorem{theorem}{Theorem}[section] 
\newtheorem{lemma}[theorem]{Lemma} 
\newtheorem{prop}[theorem]{Proposition} 
\newtheorem{cor}[theorem]{Corollary} 
\newtheorem{question}{Question} 
\newtheorem*{gluing bound}{Lemma \ref{gluing bound}} 
\newtheorem*{small cutsets}{Theorem \ref{small cutsets}} 
\newtheorem*{gluing combination}{Proposition \ref{gluing combination}} 
\newtheorem*{closure}{Proposition \ref{closure}}

\sloppy

\begin{document}
\maketitle 

\begin{abstract} 
\noindent 
A class $\mathcal{G}$ of graphs is said to be {\em $\chi$-bounded} if there is a function $f:\mathbb{N} \rightarrow \mathbb{R}$ such that for all $G \in \mathcal{G}$ and all induced subgraphs $H$ of $G$, $\chi(H) \leq f(\omega(H))$. In this paper, we show that if $\mathcal{G}$ is a $\chi$-bounded class, then so is the closure of $\mathcal{G}$ under any one of the following three operations: substitution, gluing along a clique, and gluing along a bounded number of vertices. Furthermore, if $\mathcal{G}$ is $\chi$-bounded by a polynomial (respectively: exponential) function, then the closure of $\mathcal{G}$ under substitution is also $\chi$-bounded by some polynomial (respectively: exponential) function. In addition, we show that if $\mathcal{G}$ is a $\chi$-bounded class, then the closure of $\mathcal{G}$ under the operations of gluing along a clique and gluing along a bounded number of vertices together is also $\chi$-bounded, as is the closure of $\mathcal{G}$ under the operations of substitution and gluing along a clique together. 
\end{abstract}

\section{Introduction} 

All graphs in this paper are simple and finite (possibly empty). A
{\em clique} in a graph $G$ is a set of pairwise adjacent vertices of
$G$. The {\em clique number} of a graph $G$ (i.e.\ the maximum number
of vertices in a clique in $G$) is denoted by $\omega(G)$, and the
{\em chromatic number} of $G$ (i.e.\ the smallest number of colors
needed to properly color $G$) is denoted by $\chi(G)$. A class
$\mathcal{G}$ of graphs is said to be {\em $\chi$-bounded} provided
that there exists a function $f:\mathbb{N} \rightarrow
\mathbb{R}$ such that for all $G \in \mathcal{G}$, and all (possibly
empty) induced subgraphs $H$ of $G$, $\chi(H) \leq f(\omega(H))$;
under such circumstances, we say that the class $\mathcal{G}$ is {\em
  $\chi$-bounded by $f$}, and that $f$ is a {\em $\chi$-bounding
  function} for $\mathcal{G}$. (We observe that there is no loss in
generality in assuming that the function $f$ is non-decreasing;
indeed, if $G$ is $\chi$-bounded by a function $f:\mathbb{N} 
\rightarrow \mathbb{R}$, then it is also $\chi$-bounded by the
non-decreasing function $g:\mathbb{N} \rightarrow
\mathbb{R}$ given by $n \mapsto \max\{f(0),...,f(n)\}$.) A well-known
example of a $\chi$-bounded class is the class of {\em perfect}
graphs, that is, graphs $G$ that satisfy the property that for every
induced subgraph $H$ of $G$, $\chi(H) = \omega(H)$. Clearly, the class
of perfect graphs is $\chi$-bounded by the identity function.
\\
\\
A class of graphs is said to be {\em hereditary} if it is closed under
isomorphism and induced subgraphs. (In particular, every non-empty 
hereditary class contains the empty graph.) Note that if
$\mathcal{G}$ is a hereditary class, then $\mathcal{G}$ is
$\chi$-bounded if and only if there exists a function $f:\mathbb{N} 
\rightarrow \mathbb{R}$ such that $\chi(G) \leq
f(\omega(G))$ for all $G \in \mathcal{G}$, and under such
circumstances, $f$ is a $\chi$-bounding function for
$\mathcal{G}$. $\chi$-boundedness has mostly been studied in the
context of classes obtained by forbidding certain families of graphs
as induced subgraphs, and all such classes are easily seen to be
hereditary.
\\
\\
Several theorems and conjectures state that some classes obtained by
forbidding certain families of graphs are $\chi$-bounded. A well known
example is the strong perfect graph theorem (see~\cite{SPGT}), which
states that a graph $G$ is perfect if and only if it is Berge (a graph
$G$ is {\em Berge} if neither $G$ nor its complement contains an
induced odd cycle of length at least five). Many others exist; see for
instance~\cite{gyarfas:perfect, gst, kp, kz, scott:tree, s2}. Some of
these theorems can be proven (and some of these conjectures might be
proven) by using the following approach: show that all graphs of the
class are either in some well understood basic class, or can be cut
into pieces by some appropriate operation (or decomposition) that in a
sense ``preserves'' the bound on the chromatic number that is to be
proven. This approach (let us call it the \emph{structural approach})
is far from being the only one: some deep $\chi$-boundedness results
have been proven by using extremal (e.g.~Ramsey-theoretic) or
probabilistic arguments. But the structural approach seems to be very
effective for finding optimal $\chi$-bounding functions, and in some
situations, it is the only known approach. For instance, the
structural proof of the strong perfect graph theorem~\cite{SPGT}
(together with a shortening, see~\cite{chudnovsky.seymour:even}) is
the only proof currently available for the $\chi$-boundedness of Berge
graphs.
\\
\\
All this raises the following question: what operations preserve
$\chi$-boundedness? A great deal of research has been devoted to a
similar question: what are the operations that preserve perfection
(see, for instance,~\cite{livre:perfectgraphs})? Historically, the
main operations that have been considered are clique
cutsets~\cite{gallai:triangule}, substitutions~\cite{Lovasz},
amalgams~\cite{burlet.fonlupt:meynieltop},
2-joins~\cite{cornuejols.cunningham:2join}, homogeneous
pairs~\cite{chvatal.sbihi:bullfree}, star cutsets and skew
partitions~\cite{chvatal:starcutset}. Note that the word ``operation''
is perhaps not precise here, because the star cutset and the skew
partition are not really operations that allow one to build bigger
graphs by gluing pieces together. Nevertheless, they are very
important for perfect graphs.
\\
\\
These same operations can also be studied in the context of
$\chi$-boundedness. In this paper, we consider two of these
operations: clique cutsets and substitution (a {\em clique cutset} in
a graph $G$ is a clique in $G$ whose deletion from $G$ yields a
disconnected graph; substitution is defined in subsection
\ref{subsection:definitions}). While a clique cutset is a
decomposition, rather than an operation, it is easy to see that it
corresponds to the operation of ``gluing along a clique,'' which we
define in subsection \ref{subsection:definitions}. We also consider
one additional operation: gluing along a bounded number of vertices
(see subsection \ref{subsection:definitions} for a formal definition).
\\
\\
The paper is organized as follows. In section
\ref{section:substitution}, we show that the closure of a
$\chi$-bounded class under substitution is again $\chi$-bounded (see Theorem 
\ref{general bound}), and we also examine the effects of substitution
on $\chi$-bounding functions. In particular, we show the following: if
a class $\mathcal{G}$ is $\chi$-bounded by a polynomial function $P$,
then there exists a polynomial function $Q$ such that the closure of
$\mathcal{G}$ under substitution is $\chi$-bounded by $Q$ (see
Theorem \ref{poly}). Interestingly, the degree of $Q$ cannot be bounded by any
function of the degree of $P$ (see Proposition \ref{fractional}). Further, we
prove that if a class $\mathcal{G}$ is $\chi$-bounded by an
exponential function, then the closure of $\mathcal{G}$ under
substitution is also $\chi$-bounded by some exponential function (see
Theorem \ref{exp}).
\\
\\
In section \ref{section:gluing}, we turn to the two gluing
operations. It is easy to show that the closure of a $\chi$-bounded
class under gluing along a clique is $\chi$-bounded (see Proposition 
\ref{gluing clique only}). Next, we show that the closure of a $\chi$-bounded
class under gluing along at most $k$ vertices (where $k$ is a fixed
positive integer) is $\chi$-bounded (see Theorem \ref{small cutsets}). We note that this answers an open question from~\cite{CiMi:sep2}. In~\cite{CiMi:sep2}, Cicalese and Milani\v c ask whether for some fixed $k$, the class of graphs of separability at most $k$ is $\chi$-bounded, where a graph has \emph{separability at most $k$} if every two non-adjacent vertices are separated by a set of at most $k$ other vertices. Since graphs of separability at most $k$ form a subclass of the closure of the class of all complete graphs under gluing along at most $k$ vertices, Theorem \ref{small cutsets} implies that graphs of separability at most $k$ are $\chi$-bounded by the linear function $f(x) = x+2k^2-1$. We also note that the fact that the closure of a $\chi$-bounded class under gluing along at most $k$ vertices is again $\chi$-bounded also follows from an earlier result due to a group of authors \cite{alon}. However, the proof presented in this paper is significantly different from the one given in \cite{alon}, and furthermore, the $\chi$-bounding function that we obtained is better than the one that can be derived using the result from \cite{alon} (see section~\ref{section:gluing} for a more detailed explanation).
In section \ref{section:gluing}, we also show that the closure of a
$\chi$-bounded class under both of our gluing operations (gluing along 
a clique and gluing along at most $k$ vertices) together is 
$\chi$-bounded (see Proposition \ref{gluing combination}).  At the end of the 
section, we prove that that the closure of a $\chi$-bounded class 
under substitution and gluing along a clique together is $\chi$-bounded 
(see Proposition \ref{closure}, as well as Proposition \ref{gluing polyexp} for a strengthening 
of Proposition \ref{closure} in some special cases).
\\
\\
Finally, in section \ref{section:questions}, we state some open
questions related to $\chi$-boundedness.

\subsection{Definitions and Notation} \label{subsection:definitions} 

 The set of all non-negative integers is denoted by $\mathbb{N}$, and the set of all positive integers is denoted by $\mathbb{Z}^+$. Given a real number $r$, we denote by $\lfloor r \rfloor$ the largest integer that is no greater than $r$. Given a function $f:A \rightarrow B$ and a set $A' \subseteq A$, we denote by $f \upharpoonright A'$ the restriction of $f$ to $A'$, and we denote by $f[A']$ the set of all $b \in B$ such that for some $a \in A'$, $b = f(a)$. 
\\
\\
The vertex set of a graph $G$ is denoted by $V_G$. A non-empty graph is said to be {\em trivial} if it has only one vertex, and it is said to be {\em non-trivial} if it has at least two vertices; the empty graph is not referred to as either trivial or non-trivial. An {\em optimal} coloring of a graph $G$ is a proper coloring of $G$ that uses only $\chi(G)$ colors. A {\em triangle} in $G$ is a three-vertex clique in $G$. A {\em stable set} in $G$ is a set of pairwise non-adjacent vertices in $G$. An {\em isolated vertex} in $G$ is a vertex of $G$ that has no neighbors. Given a set $S \subseteq V_G$, we denote by $G[S]$ the subgraph of $G$ induced by $S$. Given a vertex $v \in V_G$ and a set $S \subseteq V_G \smallsetminus \{v\}$, we say that $v$ is {\em complete} (respectively: {\em anti-complete}) to the set $S$ or to the induced subgraph $G[S]$ of $G$ provided that $v$ is adjacent (respectively: non-adjacent) to every vertex in $S$; $v$ is said to be {\em mixed} on the set $S$ or the induced subgraph $G[S]$ if $v$ is neither complete nor anti-complete to $S$. A non-empty set $S \subseteq V_G$ is a {\em homogeneous set} in $G$ provided that no vertex in $V_G \smallsetminus S$ is mixed on $S$. (Thus, if $G$ is a non-empty graph, then $V_G$ is a homogeneous set in $G$, as is every one-vertex subset of $V_G$.) Given disjoint sets $A,B \subseteq V_G$, we say that $A$ or $G[A]$ is {\em complete} (respectively: {\em anti-complete}) to $B$ or $G[B]$ in $G$ provided that every vertex in $A$ is complete (respectively: anti-complete) to $B$. Given a graph $G$ and a set $S \subseteq V_G$, we denote by $G \smallsetminus S$ the graph obtained from $G$ by deleting all vertices in $S$; if $S = \{v\}$, we often write $G \smallsetminus v$ instead of $G \smallsetminus S$. 
\\
\\
Given non-empty graphs $G_1$ and $G_2$ with disjoint vertex sets, a vertex $u \in G_1$, and a graph $G$, we say that $G$ is obtained by {\em substituting} $G_2$ for $u$ in $G_1$ provided that the following hold: 
\begin{itemize} 
\item $V_G = (V_{G_1} \smallsetminus \{u\}) \cup V_{G_2}$; 
\item $G[V_{G_1} \smallsetminus \{u\}] = G_1 \smallsetminus u$; 
\item $G[V_{G_2}] = G_2$; 
\item for all $v \in V_{G_1} \smallsetminus \{u\}$, if $v$ is adjacent (respectively: non-adjacent) to $u$ in $G_1$, then $v$ is adjacent  (respectively: non-adjacent) to every vertex in $V_{G_2}$ in $G$. 
\end{itemize} 
In this paper, we will often need a slightly different substitution operation, one that allows us to substitute graphs for all vertices of the ``base graph'' simultaneously. More precisely, given a non-empty graph $G_0$ with vertex set $V_{G_0} = \{v_1,...,v_t\}$ and non-empty graphs $G_1,...,G_t$ with pairwise disjoint vertex sets, we say that a graph $G$ is obtained by {\em substituting} $G_1,...,G_t$ for $v_1,...,v_t$ in $G_0$ provided that the following hold: 
\begin{itemize} 
\item $V_G = \bigcup_{i=1}^t V_{G_i}$; 
\item for all $i \in \{1,...,t\}$, $G[V_{G_i}] = G_i$; 
\item for all distinct $i,j \in \{1,...,t\}$, if $v_i$ is adjacent (respectively: non-adjacent) to $v_j$ in $G_0$, then $V_{G_i}$ is complete (respectively: anti-complete) to $V_{G_j}$ in $G$. 
\end{itemize} 
Clearly, any graph that can be obtained by simultaneous substitution can also be obtained by sequentially applying ordinary substitution. Conversely, since the graphs used in simultaneous substitution may possibly be trivial, the operation of simultaneous substitution is no more restrictive than ordinary substitution. However, simultaneous substitution is often more convenient to work with. 
\\
\\
Next, we define a certain ``gluing operation'' as follows. Let $G_1$ and $G_2$ be non-empty graphs with inclusion-wise incomparable vertex sets, and let $C = V_{G_1} \cap V_{G_2}$. Assume that $G_1[C] = G_2[C]$. Let $G$ be a graph such that $V_G = V_{G_1} \cup V_{G_2}$, with adjacency as follows: 
\begin{itemize} 
\item $G[V_{G_1}] = G_1$; 
\item $G[V_{G_2}] = G_2$; 
\item $V_{G_1} \smallsetminus C$ is anti-complete to $V_{G_2} \smallsetminus C$ in $G$. 
\end{itemize} 
We then say that $G$ is obtained by {\em gluing $G_1$ and $G_2$ along $C$}. Under these circumstances, we also say that $G$ is obtained by gluing $G_1$ and $G_2$ along $|C|$ vertices. If $C$ is, in addition, a (possibly empty) clique in both $G_1$ and $G_2$, then we say that $G$ is obtained from $G_1$ and $G_2$ by {\em gluing along a clique}. We observe that gluing two graphs with disjoint vertex sets along the empty set (equivalently: along the empty clique) simply amounts to taking the disjoint union of the two graphs; thus, if a hereditary class $\mathcal{G}$ is closed under gluing along a clique, then $\mathcal{G}$ is also closed under taking disjoint unions. 
\\
\\
Given a positive integer $k$ and a class $\mathcal{G}$ of graphs, we say that $\mathcal{G}$ is {\em closed under gluing along at most $k$ vertices} provided that for all non-empty graphs $G_1,G_2 \in \mathcal{G}$ with inclusion-wise incomparable vertex sets, if $G_1[V_{G_1} \cap V_{G_2}] = G_2[V_{G_1} \cap V_{G_2}]$ and $|V_{G_1} \cap V_{G_2}| \leq k$, then the graph obtained by gluing $G_1$ and $G_2$ along $V_{G_1} \cap V_{G_2}$ is a member of $\mathcal{G}$. 
\\
\\
We observe that (like substitution) the operation of gluing along a clique preserves hereditariness, as does the operation of gluing along a bounded number of vertices.

\section{Substitution} \label{section:substitution} 

Given a class $\mathcal{G}$ of graphs, we denote by $\mathcal{G}^+$ the closure of $\mathcal{G}$ under taking disjoint unions, and we denote by $\mathcal{G}^*$ the closure of $\mathcal{G}$ under taking disjoint unions and substitution. In this section, we show that if $\mathcal{G}$ is a $\chi$-bounded class, then the class $\mathcal{G}^*$ is also $\chi$-bounded (see Theorem \ref{general bound}). We then improve on this result in a number of special cases: when the $\chi$-bounding function for $\mathcal{G}$ is polynomial (see Theorem \ref{poly}), when it is supermultiplicative (see Proposition \ref{supermult}), and when it is exponential (see Theorem \ref{exp}).

\subsection{Substitution depth and $\chi$-boundedness}

Let $\mathcal{G}$ be a hereditary class. We note that if $\mathcal{G}$ contains even one non-empty graph, then $\mathcal{G}^+$ contains all the edgeless graphs; we also note that if $\mathcal{G}$ is $\chi$-bounded by a non-decreasing function $f$, then $\mathcal{G}^+$ is also $\chi$-bounded by $f$. We observe that every graph $G \in \mathcal{G}^* \smallsetminus \mathcal{G}^+$ can be obtained from a graph $G_0 \in \mathcal{G}^+$ with vertex set $V_{G_0} = \{v_1,...,v_t\}$ (where $2 \leq t \leq |V_G|-1$) and non-empty graphs $G_1,...,G_t \in \mathcal{G}^*$ with pairwise disjoint vertex sets by substituting $G_1,...,G_t$ for $v_1,...,v_t$ in $G_0$. We now define the {\em substitution depth} of the graphs $G \in \mathcal{G}^*$ with respect to $\mathcal{G}$, denoted by $d_{\mathcal{G}}(G)$, as follows. If $G$ is the empty graph, then set $d_{\mathcal{G}}(G) = -1$. For all non-empty graphs $G \in \mathcal{G}^+$, set $d_{\mathcal{G}}(G) = 0$. Next, let $G \in \mathcal{G}^* \smallsetminus \mathcal{G}^+$, and assume that $d_{\mathcal{G}}(G')$ has been defined for every graph $G' \in \mathcal{G}^* \smallsetminus \mathcal{G}^+$ with at most $|V_G|-1$ vertices. Then we define $d_{\mathcal{G}}(G)$ to be the smallest non-negative integer $r$ such that there exist non-empty graphs $G_1,...,G_t \in \mathcal{G}^*$ (where $2 \leq t \leq |V_G|-1$) with pairwise disjoint vertex sets, and a graph $G_0 \in \mathcal{G}^+$ with vertex set $V_{G_0} = \{v_1,...,v_t\}$, where $v_1,...,v_s$ (for some $s \in \{0,...,t\}$) are isolated vertices in $G_0$ and each of $v_{s+1},...,v_t$ has a neighbor in $G_0$, such that $G$ is obtained by substituting $G_1,...,G_t$ for $v_1,...,v_t$ in $G_0$, and 
\begin{displaymath} 
r = \max(\{d_{\mathcal{G}}(G_1),...,d_{\mathcal{G}}(G_s)\} \cup \{d_{\mathcal{G}}(G_{s+1})+1,...,d_{\mathcal{G}}(G_t)+1\}). 
\end{displaymath} 
We observe that the fact that $\mathcal{G}$ is hereditary implies that $d_{\mathcal{G}}(H) \leq d_{\mathcal{G}}(G)$ for all graphs $G \in \mathcal{G}^*$, and all induced subgraphs $H$ of $G$. We now prove a technical lemma. 
\begin{lemma} \label{depth} Let $\mathcal{G}$ be a hereditary class, $\chi$-bounded by a non-decreasing function $f:\mathbb{N} \rightarrow \mathbb{R}$. Then for all $G \in \mathcal{G}^*$, we have that $\omega(G) \geq d_{\mathcal{G}}(G)+1$ and $\chi(G) \leq f(\omega(G))^{d_{\mathcal{G}}(G)+1}$. 
\end{lemma}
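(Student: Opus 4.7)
The plan is to prove both inequalities simultaneously by induction on $|V_G|$, using the recursive definition of $d_{\mathcal{G}}$ to decompose $G$ in a way that matches the optimal substitution depth. The base cases are routine: for $G$ empty, $d_{\mathcal{G}}(G)+1 = 0$ and both $\omega(G) = 0$ and $\chi(G) = 0 \leq 1 = f(\omega(G))^0$; for non-empty $G \in \mathcal{G}^+$ we have $d_{\mathcal{G}}(G) = 0$, so $\omega(G) \geq 1$ is immediate, and $\chi(G) \leq f(\omega(G))$ follows from the fact that $\mathcal{G}^+$ inherits the $\chi$-bounding function $f$ from $\mathcal{G}$.

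For the inductive step, fix a witness for $r := d_{\mathcal{G}}(G)$: graphs $G_0 \in \mathcal{G}^+$ on vertices $v_1,\dots,v_t$ with $v_1,\dots,v_s$ isolated and $v_{s+1},\dots,v_t$ non-isolated, and graphs $G_1,\dots,G_t \in \mathcal{G}^*$ such that $G$ is obtained by simultaneous substitution, with $r = \max(\{d_{\mathcal{G}}(G_i) : i \leq s\} \cup \{d_{\mathcal{G}}(G_j)+1 : j > s\})$. For the clique bound, I would note that $\omega(G) \geq \omega(G_i)$ for every $i$ (take a maximum clique inside $V_{G_i}$) and that $\omega(G) \geq \omega(G_j)+1$ for every non-isolated $v_j$ (take a maximum clique in $V_{G_j}$ together with any vertex from $V_{G_k}$ for some $G_0$-neighbor $v_k$ of $v_j$, using that each $G_k$ is non-empty). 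Combined with the inductive hypothesis $\omega(G_i) \geq d_{\mathcal{G}}(G_i)+1$, this gives $\omega(G) \geq r+1$.

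The chromatic bound is the main point. The key observation is that the union $A$ of the $V_{G_i}$ for isolated $v_i$ is \emph{anti-complete} in $G$ to the union $B$ of the $V_{G_j}$ for non-isolated $v_j$, and moreover the sets $V_{G_i}$ for $i \leq s$ are pairwise anti-complete. Hence $\chi(G[A]) = \max_{i \leq s}\chi(G_i)$, and by the inductive hypothesis each such term is at most $f(\omega(G_i))^{d_{\mathcal{G}}(G_i)+1} \leq f(\omega(G))^{r+1}$ (using that $f$ is non-decreasing and $\omega(G_i) \leq \omega(G)$). For the non-isolated part, I would use the standard bound $\chi(G[B]) \leq \chi(G_0[\{v_{s+1},\dots,v_t\}]) \cdot \max_{j > s}\chi(G_j)$, obtained by coloring the base graph on the non-isolated vertices and then using $\chi(G_j)$ ``shades'' within each class. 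The first factor is at most $f(\omega(G))$, and by induction the second is at most $f(\omega(G))^{r}$ (using that $d_{\mathcal{G}}(G_j)+1 \leq r$ for non-isolated $v_j$), giving $\chi(G[B]) \leq f(\omega(G))^{r+1}$. Since $A$ and $B$ are anti-complete, $\chi(G) = \max(\chi(G[A]), \chi(G[B])) \leq f(\omega(G))^{r+1}$, as desired.

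The main obstacle is precisely the bookkeeping in the final step: a naive argument that multiplies $\chi(G_0)$ by $\max_i \chi(G_i)$ ignores the distinction between isolated and non-isolated $v_i$ and loses an extra factor of $f(\omega(G))$, producing the weaker bound $f(\omega(G))^{r+2}$. The definition of $d_{\mathcal{G}}$ is tailored so that isolated vertices contribute only $d_{\mathcal{G}}(G_i)$ (not $d_{\mathcal{G}}(G_i)+1$) to $r$, and the matching proof idea is to exploit the anti-completeness between $A$ and $B$ so that the colorings of the isolated and non-isolated parts can be overlaid, rather than concatenated.
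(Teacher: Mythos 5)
Your proposal is correct and matches the paper's proof essentially step for step: both argue by induction on the number of vertices, use the witnessing decomposition for $d_{\mathcal{G}}(G)$, handle the isolated-vertex parts by anti-completeness, and bound the non-isolated part with a product coloring of $G_0$ against the $G_j$. The paper phrases the final step as building an explicit coloring $b(v) = (b_0(v_i), b_i(v))$ rather than quoting the inequality $\chi(G[B]) \le \chi(F) \cdot \max_j \chi(G_j)$, but this is the same idea.
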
 
\begin{proof} 
We proceed by induction on the number of vertices. Fix $G \in \mathcal{G}^*$, and assume that the claim holds for graphs in $\mathcal{G}^*$ that have fewer vertices than $G$. If $G \in \mathcal{G}^+$, then the result is immediate, so assume that $G \notin \mathcal{G}^+$. Fix $G_0 \in \mathcal{G}^+$ with vertex set $V_{G_0} = \{v_1,...,v_t\}$ (with $2 \leq t \leq |V_G|-1$), where $v_1,...,v_s$ (with $s \in \{0,...,t\}$) are isolated vertices in $G_0$ and each of $v_{s+1},...,v_t$ has a neighbor in $G_0$, and non-empty graphs $G_1,...,G_t \in \mathcal{G}^*$ such that $G$ is obtained by substituting $G_1,...,G_t$ for $v_1,...,v_t$ in $G_0$, and $d_{\mathcal{G}}(G) = \max(\{d_{\mathcal{G}}(G_1),...,d_{\mathcal{G}}(G_s)\} \cup \{d_{\mathcal{G}}(G_{s+1})+1,...,d_{\mathcal{G}}(G_t)+1\})$. 
\\
\\
We first show that $\omega(G) \geq d_{\mathcal{G}}(G)+1$. We need to show that $\omega(G) \geq d_{\mathcal{G}}(G_i)+1$ for all $i \in \{1,...,s\}$, and that $\omega(G) \geq d_{\mathcal{G}}(G_i)+2$ for all $i \in \{s+1,...,t\}$. By the induction hypothesis, we have that $\omega(G_i) \geq d_{\mathcal{G}}(G_i)+1$ for all $i \in \{1,...,t\}$, and so it suffices to show that $\omega(G) \geq \omega(G_i)$ for all $i \in \{1,...,s\}$, and that $\omega(G) \geq \omega(G_i)+1$ for all $i \in \{s+1,...,t\}$. The former follows from the fact that $G_i$ is an induced subgraph of $G$ for all $i \in \{1,...,s\}$. For the latter, fix $i \in \{s+1,...,t\}$, and let $K$ be a clique of size $\omega(G_i)$ in $G_i$. Let $v_j$ be a neighbor of $v_i$ in $G_0$. Now fix $k \in V_{G_j}$, and note that $K \cup \{k\}$ is a clique of size $\omega(G_i)+1$ in $G$. 
\\
\\
It remains to show that $\chi(G) \leq f(\omega(G))^{d_{\mathcal{G}}(G)+1}$. Since $\chi(H)$ is non-negative integer for every graph $H$, we know that the class $\mathcal{G}$ is $\chi$-bounded by the function given by $n \mapsto \lfloor f(n) \rfloor$; thus, we may assume without loss of generality that $f(n)$ is a non-negative integer for all $n \in \mathbb{N}$. Note that $V_{G_i}$ is anti-complete to $V_G \smallsetminus V_{G_i}$ for all $i \in \{1,...,s\}$. Thus, it suffices to show that $\chi(G_i) \leq f(\omega(G))^{d_{\mathcal{G}}(G)+1}$ for all $i \in \{1,...,s\}$, and that $\chi(G[\bigcup_{i=s+1}^t V_{G_i}]) \leq f(\omega(G))^{d_{\mathcal{G}}(G)+1}$. The former is immediate from the induction hypothesis. For the latter, we use the induction hypothesis to assign a coloring $b_i:V_{G_i} \rightarrow \{1,...,f(\omega(G))^{d_{\mathcal{G}}(G)}\}$ to $G_i$ for each $i \in \{s+1,...,t\}$. Next, we use the fact that $G_0 \in \mathcal{G}^+$ and that $\mathcal{G}$ (and therefore $\mathcal{G}^+$ as well) is $\chi$-bounded by $f$ in order to assign a coloring $b_0:V_{G_0} \rightarrow \{1,...,f(\omega(G))\}$ to $G_0$. Now define $b:V_{G[\bigcup_{i=s+1}^t V_{G_i}]} \rightarrow \{1,...,f(\omega(G))\} \times \{1,...,f(\omega(G))^{d_{\mathcal{G}}(G)}\}$ by setting $b(v) = (b_0(v_i),b_i(v))$ for all $i \in \{s+1,...,t\}$ and $v \in V_{G_i}$. This is clearly a proper coloring of $G[\bigcup_{i=s+1}^t V_{G_i}]$ that uses at most $f(\omega(G))^{d_{\mathcal{G}}(G)+1}$ colors. 
\end{proof} 
\noindent 
As an immediate corollary, we have the following theorem. 
\begin{theorem} \label{general bound} Let $\mathcal{G}$ be a class of graphs, $\chi$-bounded by a non-decreasing function $f:\mathbb{N} \rightarrow \mathbb{R}$. Then the class $\mathcal{G}^*$ is $\chi$-bounded by the function $g(k) = f(k)^k$. 
\end{theorem}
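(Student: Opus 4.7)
The plan is to derive Theorem \ref{general bound} as an immediate corollary of Lemma \ref{depth}. Given an arbitrary $G \in \mathcal{G}^*$, I would invoke that lemma to obtain the two simultaneous inequalities $\omega(G) \geq d_{\mathcal{G}}(G)+1$ and $\chi(G) \leq f(\omega(G))^{d_{\mathcal{G}}(G)+1}$. The aim is then to replace the exponent $d_{\mathcal{G}}(G)+1$ in the chromatic bound by the (generally larger) quantity $\omega(G)$, which would immediately yield $\chi(G) \leq f(\omega(G))^{\omega(G)} = g(\omega(G))$, as required.

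To carry out this monotonicity step cleanly, one needs $f(\omega(G)) \geq 1$, so that raising $f(\omega(G))$ to a larger exponent genuinely weakens the inequality rather than sharpens it. As already observed in the proof of Lemma \ref{depth}, we may assume without loss of generality that $f$ is integer-valued (by replacing $f$ with $\lfloor f \rfloor$). Any non-empty graph has chromatic number at least $1$, so $\chi$-boundedness forces $f(n) \geq 1$ whenever some graph in $\mathcal{G}$ achieves clique number $n \geq 1$; since $f$ is non-decreasing, we may assume $f(n) \geq 1$ for all $n \geq 1$. Combined with the inequality $d_{\mathcal{G}}(G)+1 \leq \omega(G)$ from the first half of Lemma \ref{depth}, this gives the desired monotonicity $f(\omega(G))^{d_{\mathcal{G}}(G)+1} \leq f(\omega(G))^{\omega(G)}$.

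The only edge case is the empty graph $G$, where $\omega(G) = 0$ and $\chi(G) = 0$; here $\chi(G) \leq g(\omega(G))$ holds trivially, and one may interpret $g(0) = f(0)^0$ as $1$ by convention (or simply dispatch this case by hand). I do not foresee any substantive obstacle: the entire combinatorial content is already packaged inside Lemma \ref{depth}, and Theorem \ref{general bound} is essentially a cosmetic reformulation that trades the sharper (but more technical) bound involving substitution depth for a uniform bound depending only on $\omega(G)$.
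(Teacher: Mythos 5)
Your proposal follows exactly the paper's route: apply Lemma \ref{depth} to get $\chi(G)\leq f(\omega(G))^{d_{\mathcal{G}}(G)+1}$ and $d_{\mathcal{G}}(G)+1\leq\omega(G)$, observe $f(\omega(G))\geq 1$ for non-empty $G$, and conclude $\chi(G)\leq f(\omega(G))^{\omega(G)}$, with the empty graph handled separately. One small omission: Lemma \ref{depth} is stated for \emph{hereditary} classes $\mathcal{G}$, while the theorem permits arbitrary classes; the paper first replaces $\mathcal{G}$ by its closure $\tilde{\mathcal{G}}$ under isomorphism and induced subgraphs (noting $\tilde{\mathcal{G}}$ is still $\chi$-bounded by $f$ and $\mathcal{G}^*\subseteq\tilde{\mathcal{G}}^*$), and your write-up should make this reduction explicit before invoking the lemma. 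Also, your assertion that $f(n)\geq 1$ for all $n\geq 1$ is only justified once $\mathcal{G}$ is known to contain a non-empty graph; otherwise $\mathcal{G}^*$ contains only the empty graph and the theorem is vacuous, which is how the paper dispatches that case.
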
 
\begin{proof} 
We may assume that $\mathcal{G}$ is hereditary, because otherwise, instead of considering $\mathcal{G}$, we consider the closure $\tilde{\mathcal{G}}$ of $\mathcal{G}$ under isomorphism and taking induced subgraphs. (We may do this because $\tilde{\mathcal{G}}$ is readily seen to be hereditary and $\chi$-bounded by $f$, and furthermore, $\mathcal{G}^* \subseteq \tilde{\mathcal{G}}^*$, and so if $\tilde{\mathcal{G}}^*$ is $\chi$-bounded by $g$, then so is $\mathcal{G}^*$.) 
\\
\\
We may assume that $f(0) \geq 0$ and that $f(k) \geq 1$ for all $k \in \mathbb{Z}^+$, for otherwise, $\mathcal{G}$ contains no non-empty graphs, and the result is immediate. Next, if $H$ is the empty graph, then $\chi(H) = 0 \leq 1 = f(\omega(H))^{\omega(H)}$. Finally, suppose that $G \in \mathcal{G}^*$ is a non-empty graph. Now, by Lemma \ref{depth}, we have that $\chi(G) \leq f(\omega(G))^{d_{\mathcal{G}}(G)+1}$ and $d_{\mathcal{G}}(G)+1 \leq \omega(G)$; since $f(\omega(G)) \geq 1$, it follows that $\chi(G) \leq f(\omega(G))^{\omega(G)}$. 
\end{proof} 

\subsection{Polynomial $\chi$-bounding functions}

We now turn to the special case when a hereditary class $\mathcal{G}$ is $\chi$-bounded by a polynomial function. 
\begin{theorem} \label{poly} Let $\mathcal{G}$ be a $\chi$-bounded class. If $\mathcal{G}$ has a polynomial $\chi$-bounding function, then so does $\mathcal{G}^*$. 
\end{theorem}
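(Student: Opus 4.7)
The plan is to prove Theorem~\ref{poly} by a careful induction on $|V_G|$, refining Lemma~\ref{depth} to exploit the multiplicative nature of polynomial functions. First, I would reduce to the case where $\mathcal{G}$ is hereditary and $P$ is a monic monomial $P(k) = k^d$: any polynomial $\chi$-bounding function $ck^d+\cdots$ is dominated by $k^{d'}$ plus a constant for $d' = d + O(\log c)$, so replacing $P$ by a monic monomial of slightly larger degree loses nothing. The aim is to construct a polynomial $Q(k) = k^D$, with $D$ chosen depending on $d$ (and hence on the original $c$ and $d$, consistent with Proposition~\ref{fractional}), such that $\chi(G) \leq Q(\omega(G))$ for every $G \in \mathcal{G}^*$.

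For the inductive step, write $G = G_0[G_1,\ldots,G_t]$ with $G_0 \in \mathcal{G}^+$ non-trivial and each $G_i$ non-empty, and let $v_{j^*}$ be a vertex of $G_0$ with $\omega(G_{j^*}) = \omega^* := \max_i \omega(G_i)$. I would split into two cases. In Case~A, where $v_{j^*}$ is isolated in $G_0$, the set $V_{G_{j^*}}$ is a union of components of $G$, and $\chi(G) = \max(\chi(G_{j^*}), \chi(G\setminus V_{G_{j^*}}))$; both summands have fewer vertices and clique number at most $\omega(G)$, so the inductive bound closes immediately. In Case~B, every $v_j$ with $\omega(G_j) = \omega^*$ has a neighbor in $G_0$, so $\omega(G) \geq \omega^* + 1$, hence $\omega(G_i) \leq \omega(G)-1$ for every $i$.

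In Case~B the crude bound $\chi(G) \leq \chi(G_0)\cdot\max_i\chi(G_i)$ yields factorial-style growth $P(\omega)^\omega$ (as in Theorem~\ref{general bound}) and is inadequate. Instead I would use the sharper identity $\chi(G) = \chi_w(G_0)$, where $w(v_i) := \chi(G_i)$ and $\chi_w$ is the weighted chromatic number of $G_0$; equivalently, $\chi_w(G_0)$ is the chromatic number of the clique-blowup $G_0[K_{w(v_1)},\ldots,K_{w(v_t)}]$. This blowup lies in $\mathcal{G}^*$ and has strictly fewer vertices than $G$ unless every $G_i$ is itself a clique (a subcase I would handle directly by applying $P$ to $G_0$ together with a weighted variant of Lemma~\ref{depth}). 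The inductive hypothesis then gives $\chi(G) \leq Q(\omega_w(G_0))$, and the convexity of $x \mapsto x^D$ combined with $\sum_{v_i \in K}\omega(G_i) \leq \omega(G)$ bounds $\omega_w(G_0) = \max_K\sum_{v_i \in K}\chi(G_i)$ polynomially in $\omega(G)$.

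The main obstacle is calibrating $D$ large enough so that the recursion closes without the exponent squaring at each level --- the naive composition $Q(\omega_w(G_0)) \approx Q(\omega(G)^D)$ produces $\omega(G)^{D^2}$ rather than $\omega(G)^D$. I expect this is resolved either by tracking a normalized quantity $\chi(G)/\omega(G)^D$ and choosing $D$ large enough (as a function of $d$ and the original $c$) that this quantity remains bounded under the recursion, or by a more refined weighted-chromatic estimate on $\chi_w(G_0)$ for $G_0 \in \mathcal{G}$ that reuses the bound $P$ directly. The resulting dependence of $D$ on $c$ as well as on $d$ is the delicate point of the proof, and it is precisely this dependence that Proposition~\ref{fractional} shows to be unavoidable.
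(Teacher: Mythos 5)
Your reduction to a hereditary class and a monomial bound $P(k)=k^d$ is fine, and your case split (isolated vs.\ non-isolated $v_{j^*}$) together with the identity $\chi(G)=\chi_w(G_0)$ with $w(v_i)=\chi(G_i)$ are all correct observations. But you have correctly diagnosed the fatal problem with your own plan and then not fixed it: the recursion $\chi(G)\le Q(\omega_w(G_0))$ with $\omega_w(G_0)\le\max_K\sum_{v_i\in K}Q(\omega(G_i))\le\omega(G)^D$ squares the exponent at every level, and neither of your two candidate repairs actually works. Tracking $\chi(G)/\omega(G)^D$ does not help, since that ratio is only bounded by $\omega(G)^{D^2-D}$ under your recursion, which grows. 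And ``a more refined weighted-chromatic estimate on $\chi_w(G_0)$ that reuses $P$ directly'' is precisely the missing idea, not a routine step; you have named the gap, not closed it. Also, the all-$G_i$-cliques subcase is not really a genuine base case: the blowup only strictly shrinks in vertex count, and iterating the blowup reduction does not make $\omega$ or the substitution depth shrink, so it cannot serve as an inductive escape by itself.

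The paper's proof resolves the exponent-blowup with an idea that is absent from your proposal: a geometric partition of the substituted blocks by clique number. Writing $G$ as a substitution of $B_1,\ldots,B_n$ into a connected $F\in\mathcal{G}$ with $\omega_i=\omega(B_i)$, one chooses $\alpha\in[\tfrac54,\tfrac32]$ with $\alpha^m=\omega/2$ and partitions the $v_i$ into levels $V_0$ (where $\omega_i>\omega/2$), $V_j$ for $1\le j\le m$ (where $\omega_i\in(\tfrac{\omega}{2\alpha^j},\tfrac{\omega}{2\alpha^{j-1}}]$), and $V_{m+1}$ (where $\omega_i=1$). Inside each level $V_j$ one \emph{does} use the crude product bound $\chi(G_j)\le\chi(F_j)\cdot\max_{v_i\in V_j}\chi(B_i)$; the point is that within a level $\omega(F_j)\le 2\alpha^j$ is small while $\omega_i\le\omega/(2\alpha^{j-1})$ is correspondingly bounded, so $f(2\alpha^j)\,g(\omega/(2\alpha^{j-1}))$ is a term of a convergent geometric series, and summing over $j$ stays within a fixed fraction of $g(\omega)$. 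The remaining level $V_0$ is a stable set in $F$; each $B_i$ with $v_i\in V_0$ is colored greedily, and the key estimate is that $v_i$ can only neighbor levels $V_j$ with $j\ge s_i$ (since $\omega_j\le\omega-\omega_i$), so only the tail of the geometric series conflicts with $B_i$. One then shows $g(\omega_i)+(\text{tail})\le g(\omega)$ by a calibration argument: with $g(x)=x^{A+B}$ and $B=2A+11$, writing $p=1-\omega_i/\omega$ one needs $2\alpha^{A+B}p^B/(1-\alpha^{-B})+(1-p)^{A+B}\le 1$, checked separately for $p\ge 1/(A+B)$ and $p<1/(A+B)$. It is this level-by-level decomposition and the resulting geometric-series cancellation, not a weighted-chromatic-number reduction, that keeps the exponent linear in $A$ rather than letting it compound. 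Your proposal lacks this mechanism entirely, and without it the argument does not close.
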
 
\begin{proof} 
We may assume that $\mathcal{G}$ is hereditary (otherwise, instead of $\mathcal{G}$, we consider the closure of $\mathcal{G}$ under isomorphism and taking induced subgraphs). Further, we may assume that $\mathcal{G}$ is $\chi$-bounded by the function $f(x) = x^A$ for some $A \in \mathbb{Z}^+$. Set $g(x) = x^{3A+11}$, and set $B = 2A+11$, so that $g(x) = x^{A+B}$. Our goal is to show that $\mathcal{G}^*$ is $\chi$-bounded by the function $g$. Fix a graph $G \in \mathcal{G}^*$, set $d_{\mathcal{G}}(G) = t$, and assume inductively that for every graph $G' \in \mathcal{G}^*$ with $d_{\mathcal{G}}(G') < t$, we have that $\chi(G') \leq g(\omega(G'))$. Set $\omega = \omega(G)$. We need to show that $\chi(G) \leq g(\omega)$. If $G$ is the empty graph, then the result is immediate; so we may assume that $G$ is a non-empty graph. 
\\
\\
By Lemma \ref{depth}, if $t \leq 2$, then $\chi(G) \leq f(\omega(G))^3 < g(\omega(G))$, and we are done. So from now on, we assume that $t \geq 3$. Lemma \ref{depth} then implies that $\omega \geq 4$. Next, since $d_{\mathcal{G}}(H) \leq d_{\mathcal{G}}(G)$ for every induced subgraph $H$ of $G$, we may assume that $G$ is connected (for otherwise, we deal with the components of $G$ separately). Thus, there exists a connected graph $F \in \mathcal{G}$ with vertex set $V_F = \{v_1,...,v_n\}$ (with $n \geq 2$), and non-empty graphs $B_1,...,B_n \in \mathcal{G}^*$, with $d_{\mathcal{G}}(B_i) < t$ for all $i \in \{1,...,n\}$, such that $G$ is obtained by substituting $B_1,...,B_n$ for $v_1,...,v_n$ in $F$. For all $i \in \{1,...,n\}$, set $\omega_i = \omega(B_i)$. Note that by the induction hypothesis, we have that $\chi(B_i) \leq g(\omega_i)$ for all $i \in \{1,...,n\}$. We observe that if $v_i,v_j \in V_F$ are adjacent, then $\omega_i+\omega_j \leq \omega$; since $F$ contains no isolated vertices, it follows that $\omega_i \leq \omega-1$ for all $i \in \{1,...,n\}$. 
\\
\\
Fix $\alpha \in [\frac{5}{4},\frac{3}{2}]$ such that $\alpha^m = \frac{\omega}{2}$ for some $m \in \mathbb{Z}^+$; such an $\alpha$ exists because $\{\hat{\alpha}^k \mid k \in \mathbb{Z}^+, \hat{\alpha} \in [\frac{5}{4},\frac{3}{2}]\} = [\frac{5}{4},\frac{3}{2}] \cup [\frac{25}{16},+\infty)$ and $\frac{\omega}{2} \geq 2$. We now define: 
\begin{displaymath} 
\begin{array}{lcll} 
V_0 & = & \{v_i \mid \omega_i > \frac{\omega}{2}\}, & 
\\
V_j & = & \{v_i \mid \omega_i \in (\frac{\omega}{2\alpha^j},\frac{\omega}{2\alpha^{j-1}}]\}, & 1 \leq j \leq m, 
\\
V_{m+1} & = & \{v_i \mid \omega_i = 1\}, 
\end{array} 
\end{displaymath} 
so that the sets $V_0,V_1,...,V_{m+1}$ are pairwise disjoint with $V_F = \bigcup_{j=0}^{m+1} V_j$. For each $j \in \{0,...,m+1\}$, set $F_j = F[V_j]$, and let $G_j$ be the corresponding induced subgraph of $G$ (formally: $G_j = G[\bigcup_{v_i \in V_j} V_{B_i}]$). 
\\
\\
Note that if $C$ is a clique in $F$, then 
\begin{equation} \label{stab1} 
\omega \geq \Sigma_{v_i \in C \phantom{i}} \omega_i. 
\end{equation} 
In particular, $V_0$ is a stable set. Further, for all $j \in \{1,...,m\}$, if $v_i \in V_j$ then $\omega_i \geq \frac{\omega}{2\alpha^j}$; by (\ref{stab1}), this implies that $\omega \geq \omega(F_j) \cdot \frac{\omega}{2\alpha^j}$, and so $\omega(F_j) \leq 2\alpha^j$. But now for each $j \in \{1,...,m\}$, we have: 
\begin{equation} \label{fg1} 
\begin{array}{rcl} 
\chi(G_j) & \leq & \chi(F_j) \cdot \max_{v_i \in V_j}\chi(B_i) 
\\
& \leq & \chi(F_j) \cdot \max_{v_i \in V_j} g(\omega_i) 
\\
& \leq & f(2\alpha^j)g(\frac{\omega}{2\alpha^{j-1}}). 
\end{array} 
\end{equation} 
We also have that: 
\begin{equation} \label{fg2} 
\chi(G_{m+1}) = \chi(F_{m+1}) \leq f(\omega). 
\end{equation} 
We now color $G$ as follows: 
\begin{itemize} 
\item we first color each subgraph $G_j$, $j \in \{1,...,m+1\}$, with a separate set of colors (using in each case only $\chi(G_j)$ colors); 
\item we then color the subgraphs $B_i$ with $v_i \in V_0$ one at a time, introducing at each step as few new colors as possible. 
\end{itemize} 
We need to show that this coloring of $G$ uses at most $g(\omega)$ colors. 
\\
\\
From (\ref{fg1}) and (\ref{fg2}), we get that coloring the graphs $G_1,...,G_{m+1}$ together takes at most the following number of colors: 
\begin{equation} \label{fh1} 
\begin{array}{rcl} 
\Sigma_{j=1}^{m+1} \chi(G_j) & \leq & f(\omega)+\Sigma_{j=1}^m f(2\alpha^j)g(\frac{\omega}{2\alpha^{j-1}}) 
\\
& = & \omega^A+\Sigma_{j=1}^m (2\alpha^j)^A(\frac{\omega}{2\alpha^{j-1}})^{3A+11} 
\\
& = & \omega^A+(\alpha\omega)^A \Sigma_{j=1}^m (\frac{\omega}{2\alpha^{j-1}})^B 
\\
& = & \omega^{A+B}(\omega^{-B}+\frac{\alpha^A}{2^B} \Sigma_{j=0}^{m-1} (\alpha^{-B})^j) 
\\
& \leq & \omega^{A+B}(\omega^{-B}+\frac{\alpha^A}{2^B} \frac{1}{1-\alpha^{-B}}) 
\\ 
& = & g(\omega)(\omega^{-B}+\frac{\alpha^A}{2^B} \frac{1}{1-\alpha^{-B}}) 
\\
& \leq & g(\omega)(\frac{1}{2^B}+\frac{(\frac{3}{2})^A}{2^B}\frac{1}{1-(\frac{5}{4})^{-B}}) 
\\
& \leq & g(\omega)(\frac{1}{2^B}+\frac{(\frac{3}{2})^A}{2^B}\frac{1}{1-\frac{4}{5}}) 
\\
& = & g(\omega) \cdot \frac{1+5(\frac{3}{2})^A}{2^B} 
\\
& \leq & g(\omega) \cdot \frac{6(\frac{3}{2})^A}{2^{2A+11}} 
\\
& \leq & g(\omega). 
\end{array} 
\end{equation} 
Now consider the graphs $B_i$ with $v_i \in V_0$. These are pairwise anti-complete to each other (as $V_0$ is stable). Fix $v_i \in V_0$. It suffices to show that our coloring of $G$ used no more than $g(\omega)$ colors on $B_i$ and all the vertices with a neighbor in $B_i$. Note that if a vertex $v_j$ is adjacent to $v_i$ in $F$, then $V_{B_j}$ is complete to $V_{B_i}$ in $G$, and so $\omega_i+\omega_j \leq \omega$; thus, all neighbors of $v_i$ lie in 
\begin{displaymath} 
V_{m+1} \cup \{V_j \mid 1 \leq j \leq m, \frac{\omega}{2\alpha^j} < \omega-\omega_i\}. 
\end{displaymath} 
Let $s_i = min\{s \in \mathbb{Z}^+ \mid \frac{\omega}{2\alpha^s} < \omega-\omega_i\}$; $s_i$ is well-defined because $\omega_i < \omega$. Then using (\ref{fg1}) and (\ref{fg2}), we get that the number of colors already used in subgraphs $G_j$ that are not anti-complete to $B_i$ is at most: 
\begin{equation} 
\begin{array}{rcl} \label{fh2} 
\chi(G_{m+1})+\Sigma_{j=s_i}^m \chi(G_j) & \leq & f(\omega)+\Sigma_{j=s_i}^m f(2\alpha^j)g(\frac{\omega}{2\alpha^{j-1}}) 
\\
& = & f(\omega)+\Sigma_{j=s_i}^m (2\alpha^j)^A(\frac{\omega}{2\alpha^{j-1}})^{3A+11} 
\\
& = & f(\omega)+(\alpha\omega)^A\Sigma_{j=s_i}^m (\frac{\omega}{2\alpha^{j-1}})^B
\\
& = & f(\omega)+f(\alpha\omega)\Sigma_{j=0}^{m-s_i} (\frac{\omega}{2\alpha^{s_i+j-1}})^B 
\\
& = & f(\omega)+f(\alpha\omega)\Sigma_{j=0}^{m-s_i} (\frac{\alpha}{\alpha^j} \cdot \frac{\omega}{2\alpha^{s_i}})^B 
\\
& \leq & f(\omega)+f(\alpha\omega)\Sigma_{j=0}^{m-s_i} (\frac{\alpha(\omega-\omega_i)}{\alpha^j})^B. 
\end{array} 
\end{equation} 
Set $p = 1-\frac{\omega_i}{\omega}$; note that we then have that $p \in [\frac{1}{\omega},\frac{1}{2})$, as $\frac{\omega}{2} < \omega_i \leq \omega-1$. Now, we use at most $g(\omega_i) = g((1-p)\omega)$ colors on $B_i$, which together with (\ref{fh2}) implies that we use at most 
\begin{equation} 
P = f(\omega)+f(\alpha\omega)\Sigma_{j=0}^{m-s_i} (\frac{\alpha p\omega}{\alpha^j})^B+g((1-p)\omega) 
\end{equation} 
colors on $B_i$ and all the $G_j$ that are not anti-complete to $B_i$ together; our goal is to show that $P \leq g(\omega)$. Note the following: 
\begin{displaymath} 
\begin{array}{rcl} 
P & = & f(\omega)+f(\alpha\omega)\Sigma_{j=0}^{m-s_i} (\frac{\alpha p\omega}{\alpha^j})^B+g((1-p)\omega) 
\\
\\
& = & \omega^A+\alpha^A\omega^A \Sigma_{j=0}^{m-s_i} \frac{\alpha^Bp^B\omega^B}{\alpha^{jB}}+(1-p)^{A+B}\omega^{A+B} 
\\
\\
& = & \omega^{A+B}(\omega^{-B}+\alpha^{A+B}p^B \Sigma_{j=0}^{m-s_i} \frac{1}{(\alpha^B)^j}+(1-p)^{A+B}) 
\\
\\
& \leq & \omega^{A+B}(\omega^{-B}+\alpha^{A+B}p^B \Sigma_{j=0}^\infty \frac{1}{(\alpha^B)^j}+(1-p)^{A+B}) 
\\
\\
& = & g(\omega)(\omega^{-B}+\frac{\alpha^{A+B}p^B}{1-\alpha^{-B}}+(1-p)^{A+B}) 
\\
\\
& \leq & g(\omega)(2\frac{\alpha^{A+B}p^B}{1-\alpha^{-B}}+(1-p)^{A+B}). 
\end{array} 
\end{displaymath} 
(In the last step, we used the fact that $\frac{\alpha^{A+B}}{1-\alpha^{-B}} \geq 1$ and $p \geq \frac{1}{\omega}$.) Thus, in order to show that $P \leq g(\omega)$, it suffices to show that $2\frac{\alpha^{A+B}p^B}{1-\alpha^{-B}}+(1-p)^{A+B} \leq 1$. First, using the fact that $\frac{5}{4} \leq \alpha \leq \frac{3}{2}$ and $0 \leq p \leq \frac{1}{2}$ (and consequently, $\alpha p \leq \frac{3}{4}$), we get that: 
\begin{displaymath} 
\begin{array}{rcl} 
2\frac{\alpha^{A+B}p^B}{1-\alpha^{-B}} & = & 2\alpha^A\frac{(\alpha p)^B}{1-\alpha^{-B}} 
\\
& \leq & 2(\frac{3}{2})^A\frac{(\frac{3}{4})^B}{1-\frac{4}{5}} 
\\
& = & 10(\frac{3}{2})^A(\frac{3}{4})^{2A+11} 
\\
& = & 10(\frac{27}{32})^A(\frac{3}{4})^{11}
\\
& \leq & 10 \cdot (\frac{3}{4})^{11} 
\\
& \leq & \frac{1}{2}. 
\end{array} 
\end{displaymath} 
On the other hand, we have that $(1-p)^{A+B} \leq e^{-p(A+B)}$, and so if $p \geq \frac{1}{A+B}$, then 
\begin{displaymath} 
\begin{array}{rcl} 
2\frac{\alpha^{A+B}p^B}{1-\alpha^{-B}}+(1-p)^{A+B} & \leq & \frac{1}{2}+\frac{1}{e} < 1, 
\end{array} 
\end{displaymath} 
and we are done. So assume that $p < \frac{1}{A+B}$. Note first that: 
\begin{displaymath} 
\begin{array}{rcl} 
\frac{2\alpha^{A+B}}{1-\alpha^{-B}} & \leq & \frac{2(\frac{3}{2})^{A+B}}{1-\frac{4}{5}} 
\\
& = & 10(\frac{3}{2})^{3A+11} 
\\
& = & 10(\frac{3}{2})^{11}(\frac{27}{8})^A 
\\
& \leq & 4^{11} \cdot 4^A 
\\
& \leq & 4^B. 
\end{array} 
\end{displaymath} 
Now, since $p < \frac{1}{A+B}$, we have that $4p \leq 1$ and $p(A+B) \leq 1$, and consequently, that $(4p)^B \leq 4p$ and $(p(A+B))^2 \leq p(A+B)$. But now we have the following: 
\begin{displaymath} 
\begin{array}{rcl} 
2\frac{\alpha^{A+B}p^B}{1-\alpha^{-B}}+(1-p)^{A+B} & \leq & 4^Bp^B+e^{-p(A+B)} 
\\
& \leq & (4p)^B+(1-p(A+B)+\frac{(p(A+B))^2}{2}) 
\\
& \leq & 4p+(1-p(A+B)+\frac{p(A+B)}{2}) 
\\
& = & 1-(\frac{A+B-8}{2})p 
\\
& = & 1-\frac{3A+3}{2}p 
\\
& < & 1. 
\end{array} 
\end{displaymath} 
This completes the argument. 
\end{proof} 
\noindent 
It is natural to ask whether Theorem \ref{poly} could be improved by bounding the degree of $g$ in terms of the degree of $f$. However, the following proposition (Proposition \ref{fractional}) shows that this is not possible. In what follows, $\chi_f(G)$ denotes the fractional chromatic number of the graph $G$. The proof of the proposition uses the fact that there exist triangle-free graphs of arbitrarily large fractional chromatic number; this follows immediately from the fact that the Ramsey number $R(3,t)$ satisfies $\frac{R(3,t)}{t}\to\infty$, which follows from standard probabilistic arguments (in fact, $R(3,t)$ has order of magnitude $\frac{t^2}{\log t}$; the upper bound was established in \cite{ajtai} and the lower bound in \cite{Kim}). 
\begin{prop} \label{fractional} For every $d \in \mathbb{Z}^+$, there is a hereditary class $\mathcal{G}$, $\chi$-bounded by a linear $\chi$-bounding function, such that every polynomial $\chi$-bounding function of $\mathcal{G}^*$ has degree greater than $d$. 
\end{prop}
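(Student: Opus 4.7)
The plan is to exhibit, for each $d \in \mathbb{Z}^+$, a single triangle-free ``base graph'' $H$ with $\chi_f(H) > 2^d$; such a graph exists by the Ramsey-theoretic fact cited just before the proposition. I would then let $\mathcal{G}$ be the closure of $\{H\}$ under isomorphism and taking induced subgraphs. Every member of $\mathcal{G}$ is triangle-free, so $\omega(G) \leq 2$ for all $G \in \mathcal{G}$, and every non-empty member has chromatic number at most $\chi(H)$; consequently the linear function $f(x) = \chi(H) \cdot x$ is a $\chi$-bounding function for $\mathcal{G}$.

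To defeat polynomial $\chi$-bounding functions of degree at most $d$ on $\mathcal{G}^*$, I would iterate substitution. Define $H_1 = H$ and, for $k \geq 1$, let $H_{k+1}$ be obtained by substituting a copy of $H$ for each vertex of $H_k$; equivalently $H_k$ is the $k$-fold lexicographic power of $H$, and $H_k \in \mathcal{G}^*$ for every $k$. Two classical multiplicativity statements for the lexicographic (equivalently, substitution) product then do all the work: $\omega(G[H']) = \omega(G)\omega(H')$ and $\chi_f(G[H']) = \chi_f(G)\chi_f(H')$ (the latter due to Lov\'asz). Iterating these identities we obtain $\omega(H_k) = 2^k$ and $\chi(H_k) \geq \chi_f(H_k) = \chi_f(H)^k$.

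Now suppose for contradiction that some polynomial $Q$ of degree at most $d$ is a $\chi$-bounding function for $\mathcal{G}^*$. Then for every $k \geq 1$ we would have $\chi_f(H)^k \leq \chi(H_k) \leq Q(2^k)$. However $Q(2^k) = O(2^{kd})$, so the ratio $\bigl(\chi_f(H)/2^d\bigr)^k$ would have to be bounded uniformly in $k$; since $\chi_f(H) > 2^d$, this ratio grows without bound, contradicting the supposed bound. Hence no polynomial of degree at most $d$ can $\chi$-bound $\mathcal{G}^*$, as required.

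I do not anticipate a serious technical obstacle: the entire argument is a short calculation resting on two standard external facts, namely the existence of triangle-free graphs with arbitrarily large fractional chromatic number (explicitly available from the discussion preceding the proposition) and the multiplicativity of $\chi_f$ under the lexicographic product. The one real design choice is to pick $H$ so that $\chi_f(H)$ strictly dominates $2^d$, after which the multiplicative behaviour of iterated substitution automatically outgrows any polynomial of degree at most $d$ in $\omega$.
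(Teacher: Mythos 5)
Your proposal is correct and follows essentially the same construction as the paper: take a triangle-free graph $H$ with $\chi_f(H) > 2^d$, form iterated lexicographic powers $H_k \in \mathcal{G}^*$, and compare the linear growth $\omega(H_k) = 2^k$ against the geometric growth $\chi(H_k) \geq \chi_f(H_k) = \chi_f(H)^k$ to defeat any polynomial of degree at most $d$. The one substantive difference is that you invoke the multiplicativity of $\chi_f$ under the lexicographic product as a known external result, whereas the paper proves this multiplicativity from scratch (and that self-contained proof makes up the bulk of its argument); outsourcing it is perfectly legitimate and shortens things considerably, but does shift the burden onto a citation --- and in fact only the lower bound $\chi_f(F_{i+1}) \geq \chi_f(F)\chi_f(F_i)$ is actually needed, so even the cited result is stronger than required. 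A minor cosmetic difference: the paper's class $\mathcal{G}$ also includes all complete graphs (giving $\mathcal{G}$ unbounded clique number and making the linear bound $x + \chi(F)$ feel less degenerate), whereas you take only the hereditary closure of $\{H\}$; both choices produce a hereditary, linearly $\chi$-bounded class and the argument goes through either way.
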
 
\begin{proof} 
Fix $d \in \mathbb{Z}^+$. Let $F$ be a graph with $\omega(F) = 2$ and $\chi_f(F) > 2^d$. Let $\mathcal{G}$ be the class that consists of all the isomorphic copies of $F$ and its induced subgraphs, as well as all the complete graphs. Then $\mathcal{G}$ is a hereditary class, $\chi$-bounded by the linear function $f(x) = x+\chi(F)$. Suppose that $\mathcal{G}^*$ is $\chi$-bounded by a polynomial function $g$ of degree at most $d$; we may assume that $g(x) = Mx^d$ for some $M \in \mathbb{Z}^+$. 
\\
\\
Define a sequence $F_1,F_2,...$ as follows. Set $F_1 = F$, and for each $i \in \mathbb{Z}^+$, let $F_{i+1}$ be the graph with vertex set $V_F \times V_{F_i}$ in which vertices $(u_1,v_1),(u_2,v_2) \in V_{F_{i+1}}$ are adjacent if and only if either $u_1$ and $u_2$ are adjacent in $F$, or $u_1 = u_2$ and $v_1$ and $v_2$ are adjacent in $V_{F_i}$; note that this means that $F_{i+1}$ is obtained by substituting a copy $F_i^v$ of $F_i$ for every vertex $v$ of $F$, and so $F_i \in \mathcal{G}^*$ for all $i \in \mathbb{Z}^+$. For each $i \in \mathbb{Z}^+$, let $\mathcal{S}_i$ be the set of all stable sets in $F_i$, and set $\mathcal{S} = \mathcal{S}_1$. 
\\
\\
First, we note that it follows by an easy induction that $\omega(F_i) = \omega(F)^i = 2^i$ for all $i \in \mathbb{Z}^+$. Next, we argue inductively that $\chi_f(F_i) = \chi_f(F)^i$ for all $i \in \mathbb{Z}^+$. For $i = 1$, this is immediate. Now assume that $\chi_f(F_i) = \chi_f(F)^i$; we claim that $\chi_f(F_{i+1}) = \chi_f(F)^{i+1}$. 
\\
\\
We begin by showing that $\chi_f(F_{i+1}) \geq \chi_f(F)^{i+1}$. Let $(S,\lambda_S)_{S \in \mathcal{S}_{i+1}}$ be a fractional coloring of $F_{i+1}$ (where each stable set $S$ is taken with weight $\lambda_S \geq 0$) with $\Sigma_{S \in \mathcal{S}_{i+1}} \lambda_S = \chi_f(F_{i+1})$. For each $X \subseteq V_{F_{i+1}}$, set $\widehat{X} = \{u \in V_F \mid (u,v) \in X$ for some $v \in V_{F_i}\}$. Clearly, for all $S \in \mathcal{S}_{i+1}$, we have that $\widehat{S} \in \mathcal{S}$. For all $S' \in \mathcal{S}$, let $[S']_{i+1} = \{S \in \mathcal{S}_{i+1} \mid \widehat{S} = S'\}$; note that the set $\mathcal{S}_{i+1}$ is the disjoint union of the sets $[S']_{i+1}$ with $S' \in \mathcal{S}$. For each $S' \in \mathcal{S}$, set 
\begin{displaymath} 
\lambda_{S'} = \frac{\Sigma_{S \in [S']_{i+1}} \lambda_S}{\chi_f(F_i)}. 
\end{displaymath} 
Now, given $u \in V_F$, set $\mathcal{S}[u] = \{S \in \mathcal{S} \mid u \in S\}$ and $\mathcal{S}_{i+1}[u] = \{S \in \mathcal{S}_{i+1} \mid u \in \widehat{S}\}$, and note that for all $u \in V_F$, we have the following: 
\begin{displaymath} 
\begin{array}{rcl} 
\Sigma_{S' \in \mathcal{S}[u]} \lambda_{S'} & = & \Sigma_{S' \in \mathcal{S}[u]} \frac{\Sigma_{S \in [S']_{i+1}} \lambda_S}{\chi_f(F_i)} 
\\
\\
& = & \frac{1}{\chi_f(F_i)} \Sigma_{S \in \mathcal{S}_{i+1}[u]} \lambda_S
\\
\\
& \geq & \frac{\chi_f(F_i)}{\chi_f(F_i)} 
\\
\\
& = & 1. 
\end{array} 
\end{displaymath} 
Thus, $(S',\lambda_{S'})_{S \in \mathcal{S}}$ is a fractional coloring of $F$, and so $\Sigma_{S' \in \mathcal{S}} \lambda_{S'} \geq \chi_f(F)$. But now we have that: 
\begin{displaymath} 
\begin{array}{rcl} 
\chi_f(F) & \leq & \Sigma_{S' \in \mathcal{S}} \lambda_{S'} 
\\
\\
& = & \Sigma_{S' \in \mathcal{S}} \frac{\Sigma_{S \in [S']_{i+1}} \lambda_S}{\chi_f(F_i)} 
\\
\\
& = & \frac{\Sigma_{S \in \mathcal{S}_{i+1}} \lambda_S}{\chi_f(F_i)} 
\\
\\
& = & \frac{\chi_f(F_{i+1})}{\chi_f(F_i)}, 
\end{array} 
\end{displaymath} 
and so $\chi_f(F_{i+1}) \geq \chi_f(F)\chi_f(F_i) = \chi_f(F)^{i+1}$. 
\\
\\
It remains to construct a fractional coloring of $F_{i+1}$ in which the sum of weights is equal to $\chi_f(F)^{i+1}$, the lower bound for $\chi_f(F_{i+1})$ that we just obtained. First, let $(S,\lambda_S)_{S \in \mathcal{S}}$ be a fractional coloring of $F$ with $\Sigma_{S \in \mathcal{S}} \lambda_S = \chi_f(F)$, and let $(S,\lambda_S)_{S \in \mathcal{S}_i}$ be a fractional coloring of $F_i$ with $\Sigma_{S \in \mathcal{S}_i} \lambda_S = \chi_f(F_i)$. Next, for all $S \in \mathcal{S}_{i+1}$, if there exist some $S' \in \mathcal{S}$ and $S'' \in \mathcal{S}_i$ such that $S = S' \times S''$ then we set $\lambda_S = \lambda_{S'}\lambda_{S''}$, and otherwise we set $\lambda_S = 0$. But now $(S,\lambda_S)_{S \in \mathcal{S}_{i+1}}$ is a fractional coloring of $F_{i+1}$ with $\Sigma_{S \in \mathcal{S}_{i+1}} \lambda_S = \chi_f(F)\chi_f(F_i) = \chi_f(F)^{i+1}$. This completes the induction. 
\\
\\
Finally, from $\chi_f(F_i) \leq g(\omega(F_i))$, we get that $\chi_f(F)^i \leq M \cdot 2^{id}$ for all $i \in \mathbb{Z}^+$. But this implies that $\chi_f(F) \leq M^{1/i} \cdot 2^d$ for all $i \in \mathbb{Z}^+$, which is impossible since $\chi_f(F) > 2^d$ and $\lim_{i \rightarrow \infty} M^{1/i} = 1$. 
\end{proof} 

\subsection{Faster growing $\chi$-bounding functions}

A function $f:\mathbb{N} \rightarrow \mathbb{R}$ is said to be {\em supermultiplicative} provided that $f(m)f(n) \leq f(mn)$ for all $m,n \in \mathbb{Z}^+$. Our next result (Proposition \ref{supermult}) improves on Theorem \ref{general bound} in the case when the $\chi$-bounding function of a $\chi$-bounded class $\mathcal{G}$ is supermultiplicative. 
\begin{prop} \label{supermult} Let $\mathcal{G}$ a class of graphs, $\chi$-bounded by a supermultiplicative non-decreasing function $f:\mathbb{N} \rightarrow \mathbb{R}$. Then $\mathcal{G}^*$ is $\chi$-bounded by the function $g:\mathbb{N} \rightarrow \mathbb{R}$ given by $g(0) = 0$ and $g(x) = f(x)x^{\log_2x}$ for all $x \in \mathbb{Z}^+$. 
\end{prop}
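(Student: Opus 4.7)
The plan is to prove by induction on $\omega(G)$ that $\chi(G) \leq g(\omega(G))$ for every $G \in \mathcal{G}^*$. We may assume $\mathcal{G}$ is hereditary (as in the proof of Theorem \ref{general bound}), and the base cases $\omega(G) \leq 1$ and $G \in \mathcal{G}^+$ are immediate since $g(x) \geq f(x)$ for every $x \in \mathbb{Z}^+$ (because $x^{\log_2 x} \geq 1$). For the inductive step, fix $G \in \mathcal{G}^* \smallsetminus \mathcal{G}^+$ with $\omega := \omega(G) \geq 2$; after handling connected components separately, write $G = F[G_1, \ldots, G_t]$ with $F \in \mathcal{G}^+$ connected, and set $\omega_i := \omega(G_i)$. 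Since $F$ has no isolated vertex, each $\omega_i \leq \omega - 1$.

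Following the level-partition strategy used in the proof of Theorem \ref{poly}, I split $V_F$ into $V_0 = \{v_i : \omega_i > \omega/2\}$ (a stable set in $F$, since any two adjacent vertices in $V_0$ would give $\omega(G) > \omega$) and $V_k = \{v_i : \omega/2^{k+1} < \omega_i \leq \omega/2^k\}$ for $1 \leq k \leq \lfloor \log_2 \omega \rfloor$; the usual clique-sum argument gives $\omega(F[V_k]) \leq 2^{k+1}$ for $k \geq 1$. Setting $G_k^* = G[\bigcup_{v_i \in V_k} V_{G_i}]$, I color each $G_k^*$ with a fresh palette: a shared palette of at most $g(\omega - 1)$ colors for $V_0$ (stable in $F$, so the $G_i$'s with $v_i \in V_0$ are pairwise anti-complete), and, for $k \geq 1$, the product-coloring bound $\chi(G_k^*) \leq f(2^{k+1}) \cdot g(\omega/2^k)$ (using the induction hypothesis since $\omega_i \leq \omega/2^k < \omega$). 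Using disjoint palettes across levels gives
\[
\chi(G) \;\leq\; g(\omega - 1) \;+\; \sum_{k=1}^{\lfloor \log_2 \omega \rfloor} f(2^{k+1}) \cdot g(\omega/2^k).
\]

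The main obstacle is showing that the right-hand side is at most $g(\omega) = f(\omega)\, h(\omega)$, where $h(x) = x^{\log_2 x}$. Supermultiplicativity enters crucially here: the inequality $f(2^{k+1}) f(\omega/2^{k+1}) \leq f(\omega)$ absorbs the level-specific $f$-factors into $f(\omega)$, and the identity $h(x) = (x^2/2)\, h(x/2)$ makes the $h$-factors decay geometrically, so the sum should telescope into something of order $g(\omega)$. The delicate point, which I expect to be the main technical hurdle, is controlling the $g(\omega - 1)$ contribution from $V_0$, since this term is comparable in size to $g(\omega)$; closing the induction will likely require refining the partition near the top, or designing a palette-sharing scheme between $V_0$ and its neighboring levels that exploits the fact that every $F$-neighbor of a vertex $v_i \in V_0$ satisfies $\omega_j \leq \omega - \omega_i < \omega/2$, so the colors forbidden to $V_{G_i}$ form only a small subset of the palette used on the lower levels.
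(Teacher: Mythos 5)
Your setup is sound and you have correctly identified the two points where the argument must close: the sum over levels has to be absorbed into $g(\omega)$, and the $g(\omega-1)$ contribution from the ``top'' block must leave enough slack for that absorption. But the geometric level-partition by powers of $2$ is the wrong partition here, and the gap it creates is not a technical nuisance but a genuine obstruction. With $V_k = \{v_i : \omega/2^{k+1} < \omega_i \leq \omega/2^k\}$, the clique-sum bound gives $\omega(F[V_k]) \leq 2^{k+1}$, while the inner graphs satisfy $\omega_i \leq \omega/2^k$. Supermultiplicativity can then only pair $f(2^{k+1})$ with $f(\omega/2^k)$, whose product of arguments is $2\omega$, so you get $f(2\omega)$, not $f(\omega)$ as you wrote (you claimed $f(2^{k+1}) f(\omega/2^{k+1}) \leq f(\omega)$, but the factor your product-coloring bound actually forces on you is $g(\omega/2^k)$, not $g(\omega/2^{k+1})$). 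This factor-of-$2$ loss is inherent to any geometric bucketing, since each bucket spans a $[\omega/2^{k+1}, \omega/2^k]$ range. And for a general supermultiplicative $f$ the ratio $f(2\omega)/f(\omega)$ cannot be controlled: already $f(x)=2^{x^2}$ is supermultiplicative and $f(2\omega)/f(\omega)=2^{3\omega^2}$, which dwarfs the slack $g(\omega)-g(\omega-1)$ available to you.

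The paper's proof fixes this by partitioning \emph{by exact value}: $W_j = \{v_i : \omega_i = j\}$ for $j \in \{1,\dots,\lfloor\omega/2\rfloor\}$ together with $W_\infty = \{v_i : \omega_i > \omega/2\}$. Now $\omega(F[W_j]) \leq \lfloor\omega/j\rfloor$ and all inner clique numbers in $W_j$ are exactly $j$, so supermultiplicativity gives $f(\lfloor\omega/j\rfloor)f(j) \leq f(\lfloor\omega/j\rfloor\,j) \leq f(\omega)$ with no loss, and the extra $h$-factor from level $j$ is just $j^{\log_2 j}$. Summing, $\sum_{j=1}^{\lfloor\omega/2\rfloor} j^{\log_2 j} \leq \tfrac{\omega}{2}\bigl(\tfrac{\omega}{2}\bigr)^{\log_2(\omega/2)} = \bigl(\tfrac{\omega}{2}\bigr)^{\log_2\omega} = \omega^{\log_2\omega - 1}$. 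On the other side, your worry about $g(\omega-1)$ is resolved by a clean arithmetic estimate rather than palette-sharing: $g(\omega-1) \leq f(\omega)(\omega-1)^{\log_2\omega} = f(\omega)\,\omega^{\log_2\omega}(1-1/\omega)^{\log_2\omega} \leq f(\omega)\,\omega^{\log_2\omega}(1-1/\omega)$, which leaves exactly the budget $f(\omega)\,\omega^{\log_2\omega - 1}$ needed for the sum. So the two pieces fit together with equality, which is why the exact-value partition is essential; the coarser geometric partition cannot reproduce this. (One further small point: the paper inducts on substitution depth $d_{\mathcal{G}}(G)$ rather than on $\omega(G)$, though for the reason you noted --- each $\omega_i \leq \omega-1$ since $F$ has no isolated vertex --- induction on $\omega$ would also supply the needed hypothesis on the $G_i$.)
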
 
\begin{proof} 
We may assume that $\mathcal{G}$ is hereditary (otherwise, instead of $\mathcal{G}$, we consider the closure of $\mathcal{G}$ under isomorphism and induced subgraphs). Let $G \in \mathcal{G}^*$, set $d_{\mathcal{G}}(G) = t$, and assume inductively that $\chi(G') \leq g(\omega(G'))$ for all graphs $G' \in \mathcal{G}^*$ with $d_{\mathcal{G}}(G') < t$; we need to show that $\chi(G) \leq g(\omega(G))$. If $t = -1$, then $G$ is the empty graph, and the result is immediate. If $t = 0$, then $G$ is a non-empty graph in $\mathcal{G}^+$, and the result follows from the fact that $\mathcal{G}^+$ is $\chi$-bounded by $f$ and that $f(n) \leq g(n)$ for all $n \in \mathbb{Z}^+$. So assume that $t \geq 1$. By Lemma \ref{depth}, this means that $\omega(G) \geq 2$. We may assume that $G$ is connected, so that there exists a connected graph $F \in \mathcal{G}^+$ with vertex set $V_F = \{v_1,...,v_n\}$ (where $2 \leq n \leq |V_G|-1$), and non-empty graphs $B_1,...,B_n \in \mathcal{G}^*$ with pairwise disjoint vertex sets, and each with substitution depth at most $t-1$, such that $G$ is obtained by substituting $B_1,..,B_n$ for $v_1,...,v_n$ in $F$. Note that by the induction hypothesis, $\chi(B_i) \leq g(\omega(B_i))$ for all $i \in \{1,...,n\}$. 
\\
\\
Set $\omega = \omega(G)$, and for all $i \in \{1,...,n\}$, set $\omega_i = \omega(B_i)$. Next, for all $j \in \{1,...,\lfloor \frac{\omega}{2} \rfloor\}$, set $W_j = \{v_i \mid \omega_i = j\}$, and set $W_{\infty} = \{v_i \mid \omega_i > \frac{\omega}{2}\}$. For all $j \in \{1,...,\lfloor \frac{\omega}{2}\rfloor\}$, set $F_j = F[W_j]$ and $G_j = G[\bigcup_{v_i \in W_j} B_i]$, and set $F_{\infty} = F[W_{\infty}]$ and $G_{\infty} = G[\bigcup_{v_i \in W_{\infty}} B_i]$. Note that if $C$ is a clique in $F$, then we have that: 
\begin{equation} \label{stab1'} 
\omega \geq \Sigma_{v_i \in C \phantom{i}} \omega_i. 
\end{equation} 
Therefore, for all $j \in \{1,...,\lfloor \frac{\omega}{2}\rfloor\}$, we have that $\omega(F_j) \leq \lfloor \frac{\omega}{j} \rfloor$. But then for all $j \in \{1,...,\lfloor \frac{\omega}{2}\rfloor\}$, 
\begin{equation} \label{small omega} 
\begin{array}{rcl} 
\chi(G_j) & \leq & \chi(F_j) \cdot \max_{v_i \in W_j}\chi(B_i) 
\\
& \leq & f(\lfloor \frac{\omega}{j} \rfloor)g(j) 
\end{array} 
\end{equation} 
Furthermore, by (\ref{stab1'}) again, we have that $F_{\infty}$ is a stable set. Since $F$ contains no isolated vertices, we get by (\ref{stab1'}) that for all $i \in \{1,...,n\}$, $\omega_i \leq \omega-1$. Thus: 
\begin{equation} 
\begin{array}{rcl} \label{large omega} 
\chi(G_{\infty}) & \leq & \chi(F_{\infty}) \cdot \max_{v_i \in W_{\infty}}\chi(B_i) 
\\
& \leq & g(\omega-1) 
\end{array} 
\end{equation} 
But now using (\ref{small omega}) and (\ref{large omega}), we have the following: 
\begin{displaymath} 
\begin{array}{rcl} 
\chi(G) & \leq & \chi(G_{\infty})+\Sigma_{j=1}^{\lfloor \frac{\omega}{2} \rfloor} \chi(G_j) 
\\
& \leq & g(\omega-1)+\Sigma_{j=1}^{\lfloor \frac{\omega}{2} \rfloor} f(\lfloor \frac{\omega}{j} \rfloor)g(j) 
\\
& = & g(\omega-1)+\Sigma_{j=1}^{\lfloor \frac{\omega}{2} \rfloor} f(\lfloor \frac{\omega}{j} \rfloor)f(j)j^{\log_2j} 
\\
& \leq & g(\omega-1)+\Sigma_{j=1}^{\lfloor \frac{\omega}{2} \rfloor} f(\lfloor \frac{\omega}{j} \rfloor j)j^{\log_2j} 
\\
& \leq & g(\omega-1)+\Sigma_{j=1}^{\lfloor \frac{\omega}{2} \rfloor} f(\omega)j^{\log_2j} 
\\
& \leq & f(\omega)(\omega-1)^{\log_2\omega}+\frac{\omega}{2}f(\omega)(\frac{\omega}{2})^{\log_2(\frac{\omega}{2})} 
\\
& = & f(\omega)\omega^{\log_2\omega}(1-\frac{1}{\omega})^{\log_2\omega}+f(\omega)(\frac{\omega}{2})^{\log_2\omega} 
\\
& \leq & f(\omega)\omega^{\log_2\omega}(1-\frac{1}{\omega})+f(\omega)\frac{\omega^{\log_2\omega}}{\omega} 
\\
& = & f(\omega)\omega^{\log_2\omega} 
\\
& = & g(\omega) 
\end{array} 
\end{displaymath} 
This completes the argument. 
\end{proof} 
\noindent 
As a corollary of Proposition \ref{supermult}, we have the following result. 
\begin{theorem} \label{exp} Let $\mathcal{G}$ be a class of graphs, $\chi$-bounded by an exponential function. Then $\mathcal{G}^*$ is also $\chi$-bounded by an exponential function. 
\end{theorem}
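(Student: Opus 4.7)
The plan is to reduce the statement to Proposition \ref{supermult}, by first replacing the given exponential $\chi$-bounding function with one that is supermultiplicative and still bounded above by an exponential. Assume that $\mathcal{G}$ is hereditary and $\chi$-bounded by $f(x) = c \cdot a^x$ with $a, c \geq 1$. The one slightly subtle point is that any supermultiplicative $\tilde{f}$ must satisfy $\tilde{f}(1)^2 \leq \tilde{f}(1)$, and hence $\tilde{f}(1) \leq 1$, ruling out the naive choice $\tilde{f}(n) = A^n$ for all $n$. Fortunately this is not a real obstacle: any graph with clique number at most $1$ is edgeless and so has chromatic number at most $1$, so setting $\tilde{f}(1) = 1$ is compatible with the $\chi$-bounding requirement on $\mathcal{G}$.

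With this observation in mind, I would set $A = ca$ and define $\tilde{f}(0) = 0$, $\tilde{f}(1) = 1$, and $\tilde{f}(n) = A^n$ for all $n \geq 2$. Monotonicity of $\tilde{f}$ is immediate, and $\tilde{f}$ remains a $\chi$-bounding function for $\mathcal{G}$: the case $\omega \leq 1$ is handled by the remark above, while for $\omega \geq 2$ we have $\chi \leq c a^{\omega} \leq (ca)^{\omega} = A^{\omega} = \tilde{f}(\omega)$. Supermultiplicativity then reduces to three cases: $\tilde{f}(1)\tilde{f}(1) = 1 = \tilde{f}(1)$; $\tilde{f}(1)\tilde{f}(n) = A^n = \tilde{f}(n)$ for $n \geq 2$; and $\tilde{f}(m)\tilde{f}(n) = A^{m+n} \leq A^{mn} = \tilde{f}(mn)$ for $m, n \geq 2$, where the last inequality uses $m+n \leq mn$ together with $A \geq 1$.

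Applying Proposition \ref{supermult} to $\tilde{f}$ then gives that $\mathcal{G}^*$ is $\chi$-bounded by the function $g$ with $g(0) = 0$ and $g(x) = \tilde{f}(x) \cdot x^{\log_2 x} \leq A^x \cdot 2^{(\log_2 x)^2}$ for $x \geq 2$. Since $(\log_2 x)^2 = o(x)$, for all sufficiently large $x$ we have $2^{(\log_2 x)^2} \leq 2^x$, and so $g(x) \leq (2A)^x$ for such $x$; the finitely many smaller values of $g$ are absorbed into a multiplicative constant, yielding an exponential upper bound $g(x) \leq C \cdot (2A)^x$ for some $C > 0$ and all $x \in \mathbb{N}$. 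I do not anticipate any real obstacle: the whole argument is essentially the translation from ``exponential'' to ``supermultiplicative and still at most exponential,'' and the only mildly delicate step is the workaround for $\tilde{f}(1)$.
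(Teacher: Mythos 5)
Your proof is correct and takes essentially the same route as the paper: both reduce to Proposition~\ref{supermult} by replacing the given exponential by a supermultiplicative non-decreasing $\chi$-bounding function and then checking that the output of Proposition~\ref{supermult} is still bounded by an exponential. The only difference is that the paper sidesteps the $\tilde f(1)\le 1$ issue by simply taking $f(x)=2^{c(x-1)}$ for a large enough integer $c$ (so that $f(1)=1$ automatically and supermultiplicativity follows from $(m-1)(n-1)\ge 0$), whereas you make a piecewise definition; and in the final step your ``for sufficiently large $x$, absorb the rest into a constant'' is in fact slightly more careful than the paper's direct inequality.
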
 
\begin{proof} 
We may assume that $\mathcal{G}$ is hereditary (otherwise, instead of $\mathcal{G}$, we consider the closure of $\mathcal{G}$ under isomorphism and induced subgraphs). We may assume that $\mathcal{G}$ is $\chi$-bounded by the function $f(x) = 2^{c(x-1)}$ for some $c \in \mathbb{Z}^+$. Then $f$ is a supermultiplicative non-decreasing function, and so by Proposition \ref{supermult}, $\mathcal{G}^*$ is $\chi$-bounded by the function $g:\mathbb{N} \rightarrow \mathbb{R}$ given by $g(0) = 0$ and $g(x) = f(x)x^{\log_2x}$ for all $x \in \mathbb{Z}^+$. But now note that for all $x \in \mathbb{Z}^+$, we have the following: 
\begin{displaymath} 
\begin{array}{rcl} 
g(x) & = & x^{\log_2x}f(x) 
\\
& = & 2^{(\log_2x)^2}2^{c(x-1)} 
\\
& \leq & 2^x2^{cx} 
\\
& = & 2^{(c+1)x} 
\end{array} 
\end{displaymath} 
Thus, $\mathcal{G}^*$ is $\chi$-bounded by the exponential function $h(x) = 2^{(c+1)x}$. 
\end{proof}

\section{Small cutsets, substitution and cliques} \label{section:gluing} 

In section \ref{section:substitution}, we saw that the closure of a $\chi$-bounded class under substitution is $\chi$-bounded. In this section, we prove analogous results for the operations of gluing along a clique (see Proposition \ref{gluing clique only}) and gluing along a bounded number of vertices (see Theorem \ref{small cutsets}). We then consider ``combinations'' of the three operations discussed in this paper, namely substitution, gluing along a clique, and gluing along a bounded number of vertices. In particular, we prove that the closure of a $\chi$-bounded class under gluing along a clique and gluing along a bounded number of vertices is $\chi$-bounded (see Proposition \ref{gluing combination}), as well as that the closure of a $\chi$-bounded class under gluing along a clique and substitution is $\chi$-bounded (see Proposition \ref{closure}). 

\subsection{Gluing Operations} 

We begin by giving an easy proof of the fact that the closure of a $\chi$-bounded class under gluing along a clique is again $\chi$-bounded. 

\begin{prop} \label{gluing clique only} Let $\mathcal{G}$ be a class of graphs, $\chi$-bounded by a non-decreasing function $f:\mathbb{N} \rightarrow \mathbb{R}$. Then the closure of $\mathcal{G}$ under gluing along a clique is also $\chi$-bounded by $f$. 
\end{prop}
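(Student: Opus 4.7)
The plan is a straightforward induction on $|V_G|$, combined with the classical color-permutation trick for gluing along a clique. Let $\mathcal{H}$ denote the closure of $\mathcal{G}$ under gluing along a clique. As a preliminary, note that $\mathcal{H}$ may be assumed hereditary, and that for any $G \in \mathcal{H}$ with $G \notin \mathcal{G}$, there exist $G_1, G_2 \in \mathcal{H}$ with strictly fewer vertices than $G$ such that $C = V_{G_1} \cap V_{G_2}$ is a clique of both $G_1$ and $G_2$, $G_1[C] = G_2[C]$, and $G$ is obtained by gluing $G_1$ and $G_2$ along $C$. Observe that every clique of $G$ is contained in either $V_{G_1}$ or $V_{G_2}$ (since $V_{G_1} \smallsetminus C$ and $V_{G_2} \smallsetminus C$ are anti-complete), so $\omega(G_i) \leq \omega(G)$ for $i = 1,2$.

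The base cases are the empty graph (where $\chi = 0 \leq f(0)$) and graphs $G \in \mathcal{G}$ (handled by hypothesis). For the inductive step, fix $G$ obtained by gluing $G_1, G_2 \in \mathcal{H}$ along the clique $C$. By the induction hypothesis, $\chi(G_i) \leq f(\omega(G_i)) \leq f(\omega(G))$ for $i = 1, 2$, so we may fix proper colorings $c_1 : V_{G_1} \to \{1,\dots,f(\omega(G))\}$ and $c_2 : V_{G_2} \to \{1,\dots,f(\omega(G))\}$ (possibly leaving some colors unused in each of $G_1$, $G_2$, which is harmless).

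The main step is to align the two colorings on $C$. Since $C$ is a clique in both $G_1$ and $G_2$, the restrictions $c_1 \upharpoonright C$ and $c_2 \upharpoonright C$ are both injections from $C$ into the color set $\{1, \dots, f(\omega(G))\}$. Hence there exists a permutation $\pi$ of $\{1, \dots, f(\omega(G))\}$ such that $\pi(c_2(v)) = c_1(v)$ for every $v \in C$. Define $c : V_G \to \{1, \dots, f(\omega(G))\}$ by $c(v) = c_1(v)$ if $v \in V_{G_1}$ and $c(v) = \pi(c_2(v))$ if $v \in V_{G_2}$; this is well-defined because the two cases agree on $C$.

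It remains to verify that $c$ is a proper coloring of $G$. The restriction of $c$ to $V_{G_1}$ is $c_1$, which is proper; the restriction to $V_{G_2}$ is $\pi \circ c_2$, a relabelling of the proper coloring $c_2$, hence proper. Since there are no edges between $V_{G_1} \smallsetminus C$ and $V_{G_2} \smallsetminus C$, every edge of $G$ lies in $G_1$ or in $G_2$, so $c$ is proper, giving $\chi(G) \leq f(\omega(G))$. There is no real obstacle here; the only point requiring a moment's care is checking that $\omega(G_i) \leq \omega(G)$ (which follows from the anti-completeness of $V_{G_1} \smallsetminus C$ and $V_{G_2} \smallsetminus C$) so that we may apply the induction hypothesis with the number of colors $f(\omega(G))$ rather than $f(\omega(G_i))$.
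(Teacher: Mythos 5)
Your proof is correct and takes essentially the same approach as the paper's: the paper states $\omega(G) = \max\{\omega(G_1),\omega(G_2)\}$ and $\chi(G) = \max\{\chi(G_1),\chi(G_2)\}$ and leaves the rest as ``an easy induction,'' and you have spelled out the color-permutation argument underlying the second identity. One small misattribution: the inequality $\omega(G_i) \leq \omega(G)$ holds simply because $G_i$ is an induced subgraph of $G$ and needs no appeal to the anti-completeness of $V_{G_1} \smallsetminus C$ and $V_{G_2} \smallsetminus C$; that observation instead gives the reverse inequality $\omega(G) \leq \max\{\omega(G_1),\omega(G_2)\}$, which your argument never uses.
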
 
\begin{proof} 
Note that if a graph $G$ is obtained by gluing graphs $G_1$ and $G_2$ along a clique, then $\omega(G) = \max\{\omega(G_1),\omega(G_2)\}$ and $\chi(G) = \max\{\chi(G_1),\chi(G_2)\}$. The result now follows by an easy induction. 
\end{proof} 
\noindent 
We now turn to the question of gluing along a bounded number of vertices. Given a class $\mathcal{G}$ of graphs, and a positive integer $k$, let $\mathcal{G}^k$ denote the closure of $\mathcal{G}$ under gluing along at most $k$ vertices. Our goal is to prove the following theorem. 
\begin{theorem} \label{small cutsets} Let $k$ be a positive integer, and let $\mathcal{G}$ be a class of graphs, $\chi$-bounded by a non-decreasing function $f:\mathbb{N} \rightarrow \mathbb{R}$. Then $\mathcal{G}^k$ is $\chi$-bounded by the function $g:\mathbb{N} \rightarrow \mathbb{R}$ given by $g(n) = f(n)+2k^2-1$. 
\end{theorem}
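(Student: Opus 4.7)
We plan to prove Theorem~\ref{small cutsets} by induction on $|V_G|$ via a strengthened hypothesis that handles precoloring extension, essentially forced on us by the recursion: when $G$ is decomposed along a $\le k$-cut, one side's coloring determines a precoloring on the cut that the other side must honor.

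\textbf{Strengthened hypothesis (SH).} For every $G \in \mathcal{G}^k$, every $S \subseteq V_G$ with $|S| \le k$, and every proper coloring $c_S$ of $G[S]$ whose image lies in $\{1, \dots, f(\omega(G)) + 2k^2 - 1\}$, $c_S$ extends to a proper coloring of $G$ using only colors from $\{1, \dots, f(\omega(G)) + 2k^2 - 1\}$. Theorem~\ref{small cutsets} is then the case $S = \emptyset$.

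For the base case $G \in \mathcal{G}$, take any proper $f(\omega(G))$-coloring $c^*$ of $G$ and, by permuting its colors, assume that its palette lies in $\{1, \dots, f(\omega(G)) + 2k^2 - 1\} \setminus c_S(S)$; this is possible because the latter set has at least $f(\omega(G)) + 2k^2 - 1 - k \ge f(\omega(G))$ elements (using $k \ge 1$). Then overwrite $c^*$ on $S$ by $c_S$. Properness within $S$ comes from $c_S$, properness within $V_G \setminus S$ from $c^*$, and no edge between $S$ and $V_G \setminus S$ is monochromatic, because the colors used on $V_G \setminus S$ are disjoint from $c_S(S)$.

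For the inductive step, write $G = G_1 \cup G_2$ glued along $R$ with $|R| \le k$ and $|V_{G_i}| < |V_G|$. If $S \subseteq V_{G_i}$ for some $i$, apply SH first to $G_i$ with precoloring $c_S$ to obtain a coloring $c^{(i)}$ of $G_i$, and then apply SH to $G_{3-i}$ with precoloring $c^{(i)} \upharpoonright R$ (a set of size at most $k$); the two colorings agree on $R$ and combine to a proper coloring of $G$ on the required palette. The hard case is when $S$ straddles the cut, meeting both $V_{G_1} \setminus R$ and $V_{G_2} \setminus R$: here we apply SH to $G_1$ and to $G_2$ independently with the respective restrictions of $c_S$, and then reconcile the two resulting colorings on $R \setminus S$ where they may disagree.

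The main obstacle is this straddling case, and the quadratic factor $2k^2 - 1$ in the budget is exactly what allows us to absorb it. The reconciliation can be carried out by overwriting one of the colorings on $R \setminus S$ by the values of the other and then resolving cascading color-class conflicts in the affected $V_{G_i} \setminus R$ using at most $|R \setminus S| \le k$ fresh colors drawn from unused slots of the extras palette (by the same stable-set argument as in the base case); alternatively, one strengthens SH to allow precolorings of size up to $2k$ at each level, enlarging $S$ by $R$ before the recursive calls. In either formulation, a careful audit is needed to verify that the $2k^2 - 1$ extras never run out simultaneously; this amounts to showing that at any recursion level only $O(k)$ colors are reserved for the current precoloring and the current cut reconciliation, and that these reservations are released as soon as the corresponding subproblem is resolved.
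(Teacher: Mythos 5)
You have correctly identified the straddling case as the crux, but neither of your two proposed resolutions closes the argument, and the gap is a missing idea rather than an unfinished audit. The ``overwrite $\phi_2$ on $R \setminus S$ and repair cascading conflicts with fresh colors'' suggestion does not produce a coloring that both agrees with $\phi_1$ on all of $R$ and still extends $c_S$: assigning fresh colors to $R \setminus S$ changes the coloring on $R$ again, while recoloring only the conflicting neighbors in $V_{G_2} \setminus R$ is not controlled by the induction (those neighbors need not form a stable set once you chase the cascade, and the number of fresh colors needed is not bounded by $|R \setminus S|$). The alternative of strengthening SH to $|S| \le 2k$ also fails: after replacing $S$ by $S \cup R$, the precolored set passed to $G_2$ is again enlarged by the next cut, so $|S| \le 2k$ is not maintained; and even if you could maintain it, precoloring a full cut $R$ on one side costs up to $k$ colors with no compensating saving, so the budget $2k^2-1$ is genuinely exceeded. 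There is also a smaller bug: your SH fixes the palette to $\{1, \dots, f(\omega(G))+2k^2-1\}$ for each $G$, but the precoloring $\phi_1 \upharpoonright R$ handed to the $G_2$ subproblem may use colors in $G_1$'s palette that lie outside $G_2$'s palette when $\omega(G_1)$ and $\omega(G_2)$ are incomparable.

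The paper's proof introduces a strictly weaker kind of constraint alongside precoloring, and this is the essential missing ingredient. A \emph{coloring constraint} $(B,K,\phi_K,F)$ consists of a palette $B$, a precolored set $K$ with precoloring $\phi_K$, and a map $F$ sending each uncolored vertex to a set of \emph{forbidden} colors; the inductive invariants are $|B| \ge f(\omega(G))+2k^2-1$ (a lower bound, which fixes the palette compatibility issue) together with the weighted budget $k|K| + \sum_{v \in V_G \smallsetminus K} |F(v)| \le 2k^2-1$. When $K$ meets $V_2 \smallsetminus C$, the colors $\phi_K[K \cap V_2]$ are merely \emph{forbidden} on $C \smallsetminus K$ in the $G_1$ subproblem rather than precolored; this adds at most $|C \smallsetminus K|\cdot|K\cap V_2| \le k|K\cap V_2|$ to the budget, which is exactly compensated by removing $K\cap V_2$ from the precolored set (which saves $k|K\cap V_2|$). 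The resulting $\phi_1 \upharpoonright C$ is then automatically compatible with $\phi_K$ on $K \cap V_2$, so $C \cup (K\cap V_2)$ can be safely precolored for the $G_2$ subproblem; a symmetry choice (process the budget-heavier side first, which forces $|K\cap V_2| \le k-1$) keeps the $G_2$ budget within $2k^2-1$. This asymmetric forbid-versus-precolor bookkeeping is precisely what makes the straddling case tractable, and it is absent from your proposal.
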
 
\noindent 
We begin with some definitions. Given a set $S$, we denote by $\mathscr{P}(S)$ the power set of $S$ (i.e.\ the set of all subsets of $S$). Given a graph $G$, we say that a four-tuple $(B,K,\phi_K,F)$ is a {\em coloring constraint} for $G$ provided that the following hold: 
\begin{itemize} 
\item $B$ is a non-empty set; 
\item $K \subseteq V_G$; 
\item $\phi_K:K \rightarrow B$ is a proper coloring of $G[K]$; 
\item $F:V_G \smallsetminus K \rightarrow \mathscr{P}(B)$. 
\end{itemize} 
$B$ should be seen as the set of colors with which we wish to color $G$, $K$ should be seen as the set of ``precolored'' vertices of $G$ with ``precoloring'' $\phi_K$, and for all $v \in V_G \smallsetminus K$, $F(v)$ should be seen a set of colors ``forbidden'' on $v$. Given a graph $G$ with a coloring constraint $(B,K,\phi_K,F)$, we say that a proper coloring $\phi:V_G \rightarrow B$ of $G$ is {\em appropriate} for $(B,K,\phi_K,F)$ provided that $\phi \upharpoonright K = \phi_K$, and that for all $v \in V_G \smallsetminus K$, $\phi(v) \notin F(v)$. We now prove a technical lemma. 
\begin{lemma} \label{small cutset constraint} Let $\mathcal{G}$ be a hereditary class, $\chi$-bounded by a non-decreasing function $f:\mathbb{N} \rightarrow \mathbb{R}$. Then for all $G \in \mathcal{G}^k$ and all coloring constraints $(B,K,\phi_K,F)$ for $G$ such that $|B| \geq f(\omega(G))+2k^2-1$ and $k|K|+\Sigma_{v \in V_G \smallsetminus K} |F(v)| \leq 2k^2-1$, there exists a proper coloring coloring $\phi:V_G \rightarrow B$ of $G$ that is appropriate for $(B,K,\phi_K,F)$. 
\end{lemma}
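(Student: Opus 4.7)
The plan is to prove the lemma by induction on $|V_G|$. If $V_G = \emptyset$ the empty map works; otherwise two cases arise: either $G \in \mathcal{G}$ (the genuine base case) or $G \in \mathcal{G}^k \setminus \mathcal{G}$, in which case $G$ is obtained by gluing two strictly smaller members $G_1, G_2 \in \mathcal{G}^k$ along a set $C = V_{G_1} \cap V_{G_2}$ of size at most $k$.

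For $G \in \mathcal{G}$, the plan is to first use the $\chi$-boundedness of $\mathcal{G}$ to properly color $G \setminus K$ with a canonical palette $\{1,\dots,f(\omega(G))\}$, producing $c : V_G \setminus K \to \{1,\dots,f(\omega(G))\}$, and then to reduce the task to finding an injection $\tau : \{1,\dots,f(\omega(G))\} \to B \setminus \phi_K[K]$ with $\tau(i) \notin F_i := \bigcup_{v \in c^{-1}(i)} F(v)$ for each $i$; setting $\phi = \phi_K \cup (\tau \circ c)$ then gives the required appropriate coloring. Such $\tau$ can be built greedily after sorting the classes by decreasing $|F_i|$: at step $j$, the palette $B \setminus \phi_K[K]$ has size at least $f(\omega(G)) + 2k^2 - 1 - |K|$, at most $|F_{i_j}| + (j-1)$ colors are forbidden there, and $|F_{i_j}| \leq (2k^2 - 1 - k|K|)/j$. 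The resulting convex quadratic inequality in $j$ is non-positive at $j = 1$ and $j = f(\omega(G))$ (using $|K| \leq (2k^2-1)/k$ and $f(\omega(G)) \geq 1$ for non-empty $G$), hence throughout.

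For the inductive step, I would write $K_i = K \cap V_{G_i}$, $K_C = K \cap C$, $\sigma = \sum_{v \in C \setminus K} |F(v)|$, and $b_i = k|K_i| + \sum_{v \in V_{G_i} \setminus K} |F(v)|$. A direct identity using $K_1 \cap K_2 = K_C$ yields $b_1 + b_2 \leq 2k^2 - 1 + k|K_C| + \sigma$; assume without loss of generality $b_1 \geq b_2$. First, apply the induction hypothesis to $G_1$ with $F$ augmented on each $v \in C \setminus K$ by $\{\phi_K(u) : u \in N_G(v) \cap (K_2 \setminus C)\}$. A double-counting shows the total augmentation is at most $k|K_2 \setminus C|$, so the $G_1$-budget is at most $b_1 + k|K_2 \setminus C| = k|K| + \sum_{v \in V_{G_1} \setminus K} |F(v)| \leq 2k^2 - 1$. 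The augmentation forces $\phi_1(v) \neq \phi_K(u)$ for every $u \in K_2 \setminus C$ adjacent to $v \in C$, so one may then apply the induction hypothesis to $G_2$ with $K_2 \cup C$ pre-colored via $\phi_K$ on $K_2$ and $\phi_1$ on $C$; the corresponding $G_2$-budget is $b_2 + k|C \setminus K| - \sigma$.

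The hard part will be to verify this $G_2$-budget is at most $2k^2 - 1$. From $2b_2 \leq b_1 + b_2 \leq 2k^2 - 1 + k|K_C| + \sigma$, the inequality reduces to $k|K_C| + 2k|C \setminus K| \leq 2k^2 - 1 + \sigma$. Since $|K_C| + 2|C \setminus K| = |C| + |C \setminus K| \leq 2k$, the left side is at most $2k^2$; equality occurs only in the extremal case $|C| = k$ and $K_C = \emptyset$, and outside this case $|K_C| + 2|C \setminus K| \leq 2k - 1$ so $k|K_C| + 2k|C \setminus K| \leq 2k^2 - k$, handling the bound comfortably. In the extremal case one either has $\sigma \geq 1$ (giving equality), or $\sigma = 0$, in which case $b_1 + b_2 \leq 2k^2 - 1$ combined with $b_1 \geq b_2$ and the integrality of $b_2$ forces $b_2 \leq \lfloor (2k^2-1)/2 \rfloor = k^2 - 1$, hence $b_2 + k|C \setminus K| - \sigma \leq (k^2 - 1) + k^2 = 2k^2 - 1$. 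Combining $\phi_1$ with the coloring of $G_2$ returned by the second application of the induction hypothesis then yields the required appropriate coloring of $G$.
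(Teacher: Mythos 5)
Your proof is correct and uses the same overall framework as the paper's: strong induction on $|V_G|$ with the coloring-constraint budget $k|K|+\Sigma_{v\in V_G\smallsetminus K}|F(v)|\leq 2k^2-1$, coloring $G_1$ first with augmented forbidden lists on $C\smallsetminus K$ and then $G_2$ with $C$ precolored by $\phi_1$. The differences are technical but worth noting, since they cost you some extra work. For the base case $G\in\mathcal{G}$, the paper simply observes that $|\phi_K[K]\cup\bigcup_{v\in V_G\smallsetminus K}F(v)|\leq 2k^2-1$, so $B$ retains at least $f(\omega(G))$ colors that are neither used on $K$ nor forbidden anywhere, and colors all of $G\smallsetminus K$ with those; your sorted greedy injection is valid but much heavier than needed (you also want to say ``$<$'' rather than ``$\leq 0$'' at the endpoints of your quadratic, but the verification goes through). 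For the inductive step, the paper breaks symmetry on the quantities $k|K\cap V_i|+\Sigma_{v\in V_i\smallsetminus K}|F(v)|$, which exclude the $C$-part; this gives $k|K\cap V_2|+\Sigma_{v\in V_2\smallsetminus K}|F(v)|\leq k^2-1$ directly, so the $G_2$-budget is bounded by $k|C|+k^2-1\leq 2k^2-1$ with no case analysis. Your symmetry break on $b_1\geq b_2$ (which include $C$) is the reason you are pushed into the boundary case $|C|=k$, $K_C=\emptyset$, $\sigma=0$, where the bound only just closes via integrality of $b_2$. Also, restricting the augmented forbidden colors to $\{\phi_K(u):u\in N_G(v)\cap(K_2\smallsetminus C)\}$ rather than all of $\phi_K[K_2\smallsetminus C]$ gains you nothing here: your double-count gives the same $k|K_2\smallsetminus C|$ bound the paper gets by the crude product $|C\smallsetminus K|\cdot|\phi_K[K\cap V_2]|\leq k|K\cap V_2|$, so the extra refinement is unused. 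In short: same idea, correct execution, but the paper's choice of which quantity to compare across the two sides is the tidier one.
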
 
\begin{proof} 
Fix $G \in \mathcal{G}^k$, and assume inductively that the claim holds for all proper induced subgraphs of $G$. Fix a coloring constraint $(B,K,\phi_K,F)$ for $G$ such that $|B| \geq f(\omega(G))+2k^2-1$ and $k|K|+\Sigma_{v \in V_G \smallsetminus K} |F(v)| \leq 2k^2-1$. We need to show that there exists a proper coloring $\phi:V_G \rightarrow B$ of $G$ that is appropriate for $(B,K,\phi_K,F)$. 
\\
\\
Suppose first that $G \in \mathcal{G}$. Since $k|K|+\Sigma_{v \in V_G \smallsetminus K} |F(v)| \leq 2k^2-1$, we know that $|\phi_K[K] \cup \bigcup_{v \in V_G \smallsetminus K} F(v)| \leq 2k^2-1$; consequently, $|B \smallsetminus (\phi_K[K] \cup \bigcup_{v \in V_G \smallsetminus K} F(v))| \leq f(\omega(G))$. Since $G \in \mathcal{G}$ and $\mathcal{G}$ is $\chi$-bounded by $f$, it follows that there exists a proper coloring $\phi':V_G \smallsetminus K \rightarrow B \smallsetminus (\phi_K[K] \cup \bigcup_{v \in V_G \smallsetminus K} F(v))$ of $G \smallsetminus K$. Now define $\phi:V_G \rightarrow B$ by setting 
\begin{displaymath} 
\begin{array}{rcl} 
\phi(v) & = & 
\left\{ \begin{array}{lcl} 
\phi_K(v) & {\rm if} & v \in K 
\\
\phi'(v) & {\rm if} & v \in V_G \smallsetminus K 
\end{array}\right. 
\end{array} 
\end{displaymath} 
By construction, the colorings $\phi_K$ and $\phi'$ use disjoint color sets; furthermore, for all $v \in V_G \smallsetminus K$, $\phi(v) \notin F(v)$. It follows that $\phi$ is a proper coloring of $G$, appropriate for $(B,K,\phi_K,F)$. 
\\
\\
Suppose now that $G \notin \mathcal{G}$. Then there exist graphs $G_1,G_2 \in \mathcal{G}^k$ with inclusion-wise incomparable vertex sets such that $G$ is obtained by gluing $G_1$ and $G_2$ along at most $k$ vertices. Set $C = V_{G_1} \cap V_{G_2}$; then $|C| \leq k$, $G_1[C] = G_2[C]$, and $G$ is obtained by gluing $G_1$ and $G_2$ along $C$. Set $V_1 = V_{G_1} \smallsetminus C$ and $V_2 = V_{G_2} \smallsetminus C$. Note that $V_G = C \cup V_1 \cup V_2$; furthermore, since the vertex sets of $G_1$ and $G_2$ are inclusion-wise incomparable, we know that $V_1$ and $V_2$ are both non-empty. By symmetry, we may assume that 
\begin{displaymath} 
\begin{array}{rcl} 
k|K \cap V_1|+\Sigma_{v \in V_1 \smallsetminus K} |F(v)| & \geq & k|K \cap V_2|+\Sigma_{v \in V_2 \smallsetminus K} |F(v)|. 
\end{array} 
\end{displaymath} 
Our first goal is to obtain a coloring constraint for $G_1$ that ``forbids'' on the vertices in $C \smallsetminus K$ all the colors used by $\phi_K$ on the set $K \cap V_2$, and then to use the induction hypothesis to obtain a coloring $\phi_1$ of $G_1$ that is appropriate for this constraint. We do this as follows. First, as $k|K|+\Sigma_{v \in V_G \smallsetminus K} |F(v)| \leq 2k^2-1$, the inequality above implies that $k|K \cap V_2|+\Sigma_{v \in V_2 \smallsetminus K} |F(v)| \leq k^2-1$, and consequently, that $|K \cap V_2| \leq k-1$. Now, set $K_1 = K \smallsetminus V_2$ and $\phi_{K_1} = \phi_K \upharpoonright K_1$. Further, define $F_1:(V_1 \cup C) \smallsetminus K \rightarrow \mathscr{P}(B)$ by setting 
\begin{displaymath} 
\begin{array}{rcl} 
F_1(v) & = & 
\left\{ \begin{array}{lcl} 
F(v) & {\rm if} & v \in V_1 \smallsetminus K_1 
\\
F(v) \cup \phi_K[K \cap V_2] & {\rm if} & v \in C \smallsetminus K_1 
\end{array}\right. 
\end{array} 
\end{displaymath} 
Clearly, $(B,K_1,\phi_{K_1},F_1)$ is a coloring constraint for $G_1$. Further, since $f$ is non-decreasing, we get that 
\begin{displaymath} 
\begin{array}{cccccc} 
|B| & \geq & f(\omega(G))+2k^2-1 & \geq & f(\omega(G_1))+2k^2-1. 
\end{array} 
\end{displaymath} 
Finally, note the following: 
\begin{displaymath} 
\begin{array}{ll} 
& k|K_1|+\Sigma_{v \in V_{G_1} \smallsetminus K_1} |F_1(v)| 
\\
\leq & k|K|-k|K \cap V_2|+\Sigma_{v \in V_{G_1} \smallsetminus K_1} |F(v)|+|C \smallsetminus K||\phi_K[K \cap V_2]| 
\\
\leq & k|K|-k|K \cap V_2|+\Sigma_{v \in V_G \smallsetminus K} |F(v)|+k|K \cap V_2| 
\\
= & k|K|+\Sigma_{v \in V_G \smallsetminus K} |F(v)| 
\\
\leq & 2k^2-1. 
\end{array} 
\end{displaymath} 
Thus, by the induction hypothesis, there exists a proper coloring $\phi_1:V_1 \cup C \rightarrow B$ of $G_1$ that is appropriate for  $(B,K_1,\phi_{K_1},F_1)$. 
\\
\\
Our next goal is to ``combine'' the coloring constraint $(B,K,\phi_K,F)$ for $G$ and the coloring $\phi_1$ of $G_1$ (or more precisely, the restriction of $\phi_1$ to $C$) in order to obtain a coloring constraint for $G_2$; we then use the induction hypothesis to obtain a coloring $\phi_2$ for $G_2$ that is appropriate for this constraint, and finally, we ``combine'' the colorings $\phi_1$ and $\phi_2$ to obtain a proper coloring $\phi$ of $G$ that is appropriate for the coloring constraint $(B,K,\phi_K,F)$. We do this as follows. First, set $K_2 = C \cup (K \cap V_2)$, and define $F_2 = F \upharpoonright (V_2 \smallsetminus K)$. Next, define $\phi_{K_2}:K_2 \rightarrow B$ by setting 
\begin{displaymath} 
\begin{array}{rcl} 
\phi_{K_2}(v) & = & 
\left\{ \begin{array}{lcl} 
\phi_1(v) & {\rm if} & v \in C 
\\
\phi_K(v) & {\rm if} & v \in K \cap V_2 
\end{array}\right. 
\end{array} 
\end{displaymath} 
Since $\phi_1$ and $\phi_K$ are proper colorings of $G_1$ and $G[K]$, respectively, and since the colors used to precolor vertices in $K \cap V_2$ were forbidden on the vertices in $C \smallsetminus K$, we get that $\phi_{K_2}$ is a proper coloring of $G_2[K_2]$. Thus, $(B,K_2,\phi_{K_2},F_2)$ is a coloring constraint for $G_2$. Since $f$ is non-decreasing, we know that 
\begin{displaymath} 
\begin{array}{cccccc} 
|B| & \geq & f(\omega(G))+2k^2-1 & \geq & f(\omega(G_2))+2k^2-1. 
\end{array} 
\end{displaymath} 
Further, since $k|K \cap V_2|+\Sigma_{v \in V_2 \smallsetminus K} |F(v)| \leq k^2-1$, we have the following: 
\begin{displaymath} 
\begin{array}{rcl} 
k|K_2|+\Sigma_{v \in V_{G_2} \smallsetminus K_2} |F_2(v)| & = & k|C|+k|K \cap V_2|+\Sigma_{v \in V_2 \smallsetminus K} |F(v)| 
\\
& \leq & k^2+k^2-1 
\\
& = & 2k^2-1. 
\end{array} 
\end{displaymath}  
Thus, by the induction hypothesis, there exists a proper coloring $\phi_2:V_{G_2} \rightarrow B$ of $G_2$ that is appropriate for $(B,K_2,\phi_{K_2},F_2)$. Note that by construction, $\phi_1 \upharpoonright C = \phi_2 \upharpoonright C$; define $\phi:V_G \rightarrow B$ by setting 
\begin{displaymath} 
\begin{array}{rcl} 
\phi(v) & = & 
\left\{ \begin{array}{lcl} 
\phi_1(v) & {\rm if} & v \in V_1 \cup C  
\\
\phi_2(v) & {\rm if} & v \in V_2 \cup C
\end{array}\right. 
\end{array} 
\end{displaymath} 
By construction, $\phi$ is a proper coloring of $G$, appropriate for $(B,K,\phi_K,F)$. This completes the argument. 
\end{proof} 
\noindent 
We are now ready to prove Theorem \ref{small cutsets}, restated below. 
\begin{small cutsets} Let $k$ be a positive integer, and let $\mathcal{G}$ be a class of graphs, $\chi$-bounded by a non-decreasing function $f:\mathbb{N} \rightarrow \mathbb{R}$. Then $\mathcal{G}^k$ is $\chi$-bounded by the function $g:\mathbb{N} \rightarrow \mathbb{R}$ given by $g(n) = f(n)+2k^2-1$. 
\end{small cutsets} 
\begin{proof} 
We may assume that $\mathcal{G}$ is hereditary (otherwise, instead of $\mathcal{G}$, we consider the closure of $\mathcal{G}$ under isomorphism and taking induced subgraphs). Fix $G \in \mathcal{G}^k$. Let $B = \{1,...,f(\omega(G))+2k^2-1\}$, let $K = \emptyset$, let $\phi_K$ be the empty function, and define $F:V_G \rightarrow \mathscr{P}(B)$ by setting $F(v) = \emptyset$ for all $v \in V_G$. Then $(B,K,\phi_K,F)$ is a coloring constraint for $G$ with $|B| \geq f(\omega(G))+2k^2-1$ and $k|K|+\Sigma_{v \in V_G \smallsetminus K} |F(v)| \leq 2k^2-1$. By Lemma \ref{small cutset constraint} then, there exists a proper coloring $\phi:V_G \rightarrow B$ that is appropriate for $(B,K,\phi_K,F)$. But then $\phi$ is a proper coloring of $G$ that uses at most $g(\omega(G))$ colors. 
\end{proof} 
\noindent 
As remarked in the Introduction, the fact that the closure of a $\chi$-bounded class is again $\chi$-bounded follows from a result proven in \cite{alon}. It was proven in \cite{alon} that every graph of chromatic number greater than $\max\{100k^3,m+10k^2\}$ contains a $(k+1)$-connected subgraph of chromatic number at least $m$. It is easy to see that this result implies that if $\mathcal{G}$ is $\chi$-bounded by a non-decreasing function $f:\mathbb{N} \rightarrow \mathbb{R}$, then $\mathcal{G}^k$ is $\chi$-bounded by the function $g:\mathbb{N} \rightarrow \mathbb{R}$ defined by $g(n) = \max\{100k^3,f(n)+10k^2+1\}$. Note, however, that the $\chi$-bounding function from Theorem \ref{small cutsets} is better than the $\chi$-bounding function that follows from the result of \cite{alon}. Conversely, Theorem \ref{small cutsets} implies the following strengthening of the theorem from \cite{alon}. 
\begin{cor} \label{high-conn} Let $m,k \in \mathbb{Z}^+$. Then every graph of chromatic number greater than $\max\{2k^2+k,m+2k^2-1\}$ contains a $(k+1)$-connected induced subgraph of chromatic number at least $m$. 
\end{cor}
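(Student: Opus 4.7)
The plan is to prove the contrapositive of Corollary \ref{high-conn} by a direct reduction to Theorem \ref{small cutsets}. So I would assume that every $(k+1)$-connected induced subgraph of $G$ has chromatic number strictly less than $m$, and aim to conclude that $\chi(G) \leq \max\{2k^2+k,\, m+2k^2-1\}$.

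The first step is to set $c = \max\{k+1,\, m-1\}$ and let $\mathcal{G}$ be the hereditary class of all graphs $H$ with $\chi(H) \leq c$; this class is trivially $\chi$-bounded by the non-decreasing function $f : \mathbb{N}\to\mathbb{R}$ given by $f(0) = 0$ and $f(n) = c$ for $n \geq 1$. The central task then reduces to showing that $G$ lies in $\mathcal{G}^k$: once that is established, Theorem \ref{small cutsets} immediately yields $\chi(G) \leq f(\omega(G)) + 2k^2 - 1 \leq c + 2k^2 - 1$, and a brief case split on which term of $c$ dominates confirms that $c+2k^2-1 \leq \max\{2k^2+k,\, m+2k^2-1\}$.

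To show $G \in \mathcal{G}^k$, I would proceed by strong induction on $|V_G|$, using the fact that the hypothesis on $(k+1)$-connected induced subgraphs is inherited by every induced subgraph of $G$. If $|V_G| \leq k+1$, then $\chi(G) \leq k+1 \leq c$, so $G \in \mathcal{G}$; if $G$ is $(k+1)$-connected, then by hypothesis $\chi(G) \leq m-1 \leq c$, so again $G \in \mathcal{G}$. Otherwise $|V_G| \geq k+2$ and $G$ has a vertex cutset $S$ with $|S| \leq k$, so I can partition $V_G \smallsetminus S$ into two non-empty anti-complete sets $A_1$ and $A_2$; then $G$ is obtained by gluing the two proper induced subgraphs $G[A_1 \cup S]$ and $G[A_2 \cup S]$ along $S$, each of which lies in $\mathcal{G}^k$ by the inductive hypothesis, and hence $G \in \mathcal{G}^k$ as well. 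The only step that warrants attention is this recursive decomposition, but it is entirely standard and I anticipate no real obstacle: the ``atoms'' of the recursion are precisely graphs on at most $k+1$ vertices or $(k+1)$-connected induced subgraphs of $G$, both of which lie in $\mathcal{G}$ by design.
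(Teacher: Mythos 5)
Your proof is correct and takes essentially the same approach as the paper's: you reduce to Theorem~\ref{small cutsets} applied to the class $\mathcal{G}$ of graphs with chromatic number at most $\max\{k+1,m-1\}$, exactly as the paper does. The only difference is presentational---you argue the contrapositive and spell out, via strong induction on $|V_G|$, the recursive gluing decomposition that places $G$ in $\mathcal{G}^k$, whereas the paper leaves that decomposition step implicit by directly asserting that $G \notin \mathcal{G}^k$ forces a complete or $(k+1)$-connected induced subgraph outside $\mathcal{G}$.
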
 
\begin{proof} 
Let $G$ be a graph such that $\chi(G) > \max\{2k^2+k,m+2k^2-1\}$. Then $\chi(G) \geq \max\{k+2,m\}+2k^2-1$. Next, let $\mathcal{G}$ be the class of all graphs whose chromatic number is at most $\max\{k+2,m\}-1$. Clearly, $\mathcal{G}$ is $\chi$-bounded by the constant function $f:\mathbb{N} \rightarrow \mathbb{R}$ given by $f(n) = \max\{k+2,m\}-1$. By Theorem \ref{small cutsets}, we know that $\mathcal{G}^k$ is $\chi$-bounded by the constant function $g:\mathbb{N} \rightarrow \mathbb{R}$ given by $g(n) = \max\{k+2,m\}+2k^2-2$. Since $\chi(G) \geq \max\{k+2,m\}+2k^2-1$, it follows that $G \notin \mathcal{G}^k$. This implies that there exists an induced subgraph $H$ of $G$ such that either $H$ is a complete graph or $H$ is $(k+1)$-connected, and such that $H \notin \mathcal{G}$.  
\\
\\
Suppose first that $H$ is a complete graph. Then $\chi(H) = |V_H|$, and so since $H \notin \mathcal{G}$, it follows that $|V_H| \geq \max\{k+2,m\}$. But then since $H$ is a complete graph, it follows that $H$ is $(k+1)$-connected and that $\chi(H) \geq m$, and we are done. 
\\
\\
Suppose now that $H$ is $(k+1)$-connected. But then since $H \notin \mathcal{G}$, we have that $\chi(H) \geq m$, and again we are done. 
\end{proof} 
\noindent 
We complete this subsection by considering ``combinations'' of gluing along a clique and gluing along a bounded number of vertices. Given a class $\mathcal{G}$ of graphs and a positive integer $k$, we denote by $\mathcal{G}^k_{cl}$ the closure of $\mathcal{G}$ under gluing along a clique and gluing along at most $k$ vertices. Our goal is to prove that if $\mathcal{G}$ is a $\chi$-bounded class, then for every $k \in \mathbb{Z}^+$, the class $\mathcal{G}^k_{cl}$ is $\chi$-bounded (see Proposition \ref{gluing combination} below). We begin with a technical lemma, which we then use to prove Proposition \ref{gluing combination}. 
\begin{lemma} \label{switch order} Let $\mathcal{G}$ be a hereditary class, closed under gluing along a clique, and let $k$ be a positive integer. Then $\mathcal{G}^k$ is closed under gluing along a clique, and consequently, $\mathcal{G}^k = \mathcal{G}^k_{cl}$. 
\end{lemma}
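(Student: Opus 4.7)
My plan is to prove the lemma by induction on $|V_{H_1}| + |V_{H_2}|$, showing that whenever $H_1, H_2 \in \mathcal{G}^k$ have inclusion-wise incomparable vertex sets with $C = V_{H_1} \cap V_{H_2}$ a clique in both and $H_1[C] = H_2[C]$, the graph $H$ obtained by gluing $H_1$ and $H_2$ along $C$ lies in $\mathcal{G}^k$; the equality $\mathcal{G}^k = \mathcal{G}^k_{cl}$ will then follow immediately. The base case is when both factors lie in $\mathcal{G}$, and here the hypothesis that $\mathcal{G}$ is itself closed under gluing along a clique yields $H \in \mathcal{G} \subseteq \mathcal{G}^k$ at once.

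For the inductive step, I assume without loss of generality that $H_2 \notin \mathcal{G}$, so $H_2$ is obtained from some $H_2', H_2'' \in \mathcal{G}^k$ by gluing along $C' = V_{H_2'} \cap V_{H_2''}$ with $|C'| \leq k$ and $|V_{H_2'}|, |V_{H_2''}| < |V_{H_2}|$. The crucial geometric observation is that because $V_{H_2'} \smallsetminus C'$ is anti-complete to $V_{H_2''} \smallsetminus C'$ in $H_2$, no clique of $H_2$ can meet both of these sets; hence, after possibly swapping $H_2'$ and $H_2''$, I may assume $C \subseteq V_{H_2'}$.

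Two sub-cases now arise depending on how $V_{H_1}$ and $V_{H_2'}$ relate. The inclusion $V_{H_1} \subseteq V_{H_2'}$ is immediately ruled out because it would force $V_{H_1} \subseteq V_{H_2}$, contradicting the original incomparability. In the generic sub-case, where $V_{H_1}$ and $V_{H_2'}$ are incomparable, a short check confirms $V_{H_1} \cap V_{H_2'} = C$, so the induction hypothesis applies to glue $H_1$ and $H_2'$ along $C$, producing some $\tilde{H}_1 \in \mathcal{G}^k$; a further bookkeeping verification shows that $H$ is exactly the graph obtained by gluing $\tilde{H}_1$ and $H_2''$ along $V_{\tilde{H}_1} \cap V_{H_2''}$, that this intersection equals $C'$, and that the anti-complete conditions line up correctly, so closure of $\mathcal{G}^k$ under gluing along at most $k$ vertices gives $H \in \mathcal{G}^k$. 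The degenerate sub-case is $V_{H_2'} \subseteq V_{H_1}$: combined with $V_{H_1} \cap V_{H_2'} \subseteq V_{H_1} \cap V_{H_2} = C \subseteq V_{H_2'}$, this forces $V_{H_2'} = C$, so $H_2'$ is already an induced subgraph of $H_1$; here I bypass $H_2'$ entirely and glue $H_1$ directly with $H_2''$ along $V_{H_1} \cap V_{H_2''}$, which one verifies equals $C'$ and so has size at most $k$, yielding $H \in \mathcal{G}^k$ in a single step.

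The main obstacle is precisely this degenerate sub-case: the natural induction cannot be invoked because the incomparability hypothesis breaks down, and one has to recognise that in this configuration the subconstruction through $H_2'$ is redundant and the gluing can be performed ``one level higher.'' Once closure of $\mathcal{G}^k$ under gluing along a clique is established, the identity $\mathcal{G}^k = \mathcal{G}^k_{cl}$ is immediate: $\mathcal{G}^k \subseteq \mathcal{G}^k_{cl}$ holds by definition, while conversely $\mathcal{G}^k$ is a class containing $\mathcal{G}$ that is closed under both gluing operations, so by the minimality of $\mathcal{G}^k_{cl}$ we also have $\mathcal{G}^k_{cl} \subseteq \mathcal{G}^k$.
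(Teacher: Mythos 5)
Your proof is correct and follows essentially the same strategy as the paper's: given a clique-gluing of $H_1, H_2 \in \mathcal{G}^k$, decompose the piece not in $\mathcal{G}$ as a gluing along at most $k$ vertices, observe that the clique $C$ lies entirely in one of the two factors, and then reassociate the two gluings so that the clique-gluing happens first (inside the induction) and the small-set gluing happens last (directly from the definition of $\mathcal{G}^k$), with a special case when the factor containing $C$ equals $C$. The only cosmetic difference is that the paper runs the induction on $|V_G|$ for $G$ in the closure of $\mathcal{G}^k$ under clique-gluing, whereas you run it on $|V_{H_1}| + |V_{H_2}|$; both parameters decrease in exactly the same way, and the reassociation step and its degenerate case are identical.
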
 
\begin{proof} 
Clearly, the second claim follows from the first, and so we just need to show that $\mathcal{G}^k$ is closed under gluing along a clique. Let $\tilde{\mathcal{G}}^k$ be the closure of $\mathcal{G}^k$ under gluing along a clique. We claim that $\tilde{\mathcal{G}}^k = \mathcal{G}^k$. Fix $G \in \tilde{\mathcal{G}}^k$, and assume inductively that for all $G' \in \tilde{\mathcal{G}}^k$ such that $|V_{G'}| < |V_G|$, we have that $G' \in \mathcal{G}^k$; we claim that $G \in \mathcal{G}^k$. 
\\
\\
By the definition of $\tilde{\mathcal{G}}^k$, we know that either $G \in \mathcal{G}^k$, or $G$ is obtained by gluing smaller graphs in $\tilde{\mathcal{G}}^k$ along a clique. In the former case, we are done; so assume that there exist graphs $G_1,G_2 \in \tilde{\mathcal{G}}^k$ such that $|V_{G_i}| < |V_G|$ for each $i \in \{1,2\}$, such that $C = V_{G_1} \cap V_{G_2}$ is a clique in both $G_1$ and $G_2$, and such that $G$ is obtained by gluing $G_1$ and $G_2$ along the clique $C$. By the induction hypothesis, $G_1,G_2 \in \mathcal{G}^k$. Now, if $G_1,G_2 \in \mathcal{G}$, then the fact that $\mathcal{G}$ is closed under gluing along a clique implies that $G \in \mathcal{G}$, and consequently, that $G \in \mathcal{G}^k$. So assume that at least one of $G_1$ and $G_2$ is not a member of $\mathcal{G}$; by symmetry, we may assume that $G_1 \notin \mathcal{G}$. 
\\
\\
Since $G_1 \in \mathcal{G}^k \smallsetminus \mathcal{G}$, there exist graphs $G_1^1,G_1^2 \in \mathcal{G}^k$ such that $|V_{G_1^i}| < |V_{G_1}|$ for each $i \in \{1,2\}$, and such that $G_1$ is obtained by gluing $G_1^1$ and $G_1^2$ along $K = V_{G_1^1} \cap V_{G_1^2}$, where $|K| \leq k$. Now, $C$ is a clique in $G_1$, and so we know that $C \subseteq V_{G_1^i}$ for some $i \in \{1,2\}$; by symmetry, we may assume that $C \subseteq V_{G_1^1}$. If $C = V_{G_1^1}$, then set $G_1' = G_2$; and if $C \subsetneqq V_{G_1^1}$, then let $G_1'$ be the graph obtained by gluing $G_1^1$ and $G_2$ along $C$. As $G_1^1,G_2 \in \mathcal{G}^k$, we know that $G_1' \in \tilde{\mathcal{G}}^k$. Further, note that $|V_{G_1'}| < |V_G|$, and so by the induction hypothesis, $G_1' \in \mathcal{G}^k$. But now $G$ is obtained by gluing $G_1'$ and $G_1^2$ along $K$, and so since $G_1',G_1^2 \in \mathcal{G}^k$, we know that $G \in \mathcal{G}^k$. This completes the argument. 
\end{proof} 
\begin{prop} \label{gluing combination} Let $\mathcal{G}$ be a class of graphs, $\chi$-bounded by a non-decreasing function $f:\mathbb{N} \rightarrow \mathbb{R}$, and let $k$ be a positive integer. Then $\mathcal{G}^k_{cl}$ is $\chi$-bounded by the function $g:\mathbb{N} \rightarrow \mathbb{R}$ given by $g(n) = f(n)+2k^2-1$. 
\end{prop}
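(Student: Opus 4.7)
The plan is to reduce Proposition~\ref{gluing combination} to Theorem~\ref{small cutsets} via Lemma~\ref{switch order}. First, as in the previous proofs of this section, I would assume $\mathcal{G}$ is hereditary, by replacing it with its closure under isomorphism and induced subgraphs (which can only enlarge $\mathcal{G}^k_{cl}$ while preserving the $\chi$-bound). Then I would introduce an auxiliary class: let $\mathcal{H}$ denote the closure of $\mathcal{G}$ under gluing along a clique. By Proposition~\ref{gluing clique only}, $\mathcal{H}$ is still $\chi$-bounded by $f$; since gluing along a clique preserves hereditariness (as noted in the introduction), $\mathcal{H}$ is hereditary; and by construction $\mathcal{H}$ is closed under gluing along a clique.

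Next, I would invoke Lemma~\ref{switch order} with $\mathcal{H}$ in place of $\mathcal{G}$, which gives $\mathcal{H}^k = \mathcal{H}^k_{cl}$. The key observation is then the inclusion $\mathcal{G}^k_{cl} \subseteq \mathcal{H}^k_{cl}$, which holds because $\mathcal{G} \subseteq \mathcal{H}$ and $\mathcal{H}^k_{cl}$ is closed under both of our gluing operations, hence it is one of the classes whose intersection defines $\mathcal{G}^k_{cl}$. Combining these two facts yields $\mathcal{G}^k_{cl} \subseteq \mathcal{H}^k$.

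To finish, I would apply Theorem~\ref{small cutsets} to the hereditary class $\mathcal{H}$, which is $\chi$-bounded by $f$: this immediately yields that $\mathcal{H}^k$ is $\chi$-bounded by $g(n) = f(n)+2k^2-1$, and the inclusion $\mathcal{G}^k_{cl} \subseteq \mathcal{H}^k$ transfers the same bound to $\mathcal{G}^k_{cl}$. I do not anticipate any serious obstacle: the entire argument is a short manipulation of closures, and all the heavy lifting is already packaged in Proposition~\ref{gluing clique only}, Theorem~\ref{small cutsets}, and Lemma~\ref{switch order}. The one technical point worth flagging explicitly in the written proof is that the passage from $\mathcal{G}$ to $\mathcal{H}$ does preserve hereditariness, since this is exactly what is needed to legitimise the application of Lemma~\ref{switch order} to $\mathcal{H}$.
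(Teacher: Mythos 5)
Your proposal is correct and follows essentially the same route as the paper: pass to the closure $\mathcal{H}$ (the paper's $\tilde{\mathcal{G}}$) of $\mathcal{G}$ under gluing along a clique, apply Proposition~\ref{gluing clique only} to see $\mathcal{H}$ is still $\chi$-bounded by $f$, use Lemma~\ref{switch order} to identify $\mathcal{H}^k$ with $\mathcal{H}^k_{cl}$, and finish with Theorem~\ref{small cutsets}. Your version is marginally more careful in spelling out the inclusion $\mathcal{G}^k_{cl} \subseteq \mathcal{H}^k_{cl}$ rather than asserting $\tilde{\mathcal{G}}^k = \mathcal{G}^k_{cl}$ outright, but the substance is the same.
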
 
\begin{proof} 
We may assume that $\mathcal{G}$ is hereditary (otherwise, instead of $\mathcal{G}$, we consider the closure of $\mathcal{G}$ under isomorphism and taking induced subgraphs). Let $\tilde{\mathcal{G}}$ be the closure of $\mathcal{G}$ under gluing along a clique. Then by Lemma \ref{switch order}, $\tilde{\mathcal{G}}^k = \mathcal{G}^k_{cl}$ (where $\tilde{\mathcal{G}}^k$ is the closure of $\tilde{\mathcal{G}}$ under gluing along at most $k$ vertices). By Proposition \ref{gluing clique only}, $\tilde{\mathcal{G}}$ is $\chi$-bounded by $f$; but then by Theorem \ref{small cutsets}, $\tilde{\mathcal{G}}^k$ is $\chi$-bounded by $g$. It follows that $\mathcal{G}^k_{cl}$ is $\chi$-bounded by $g$. 
\end{proof}

\subsection{Substitution and Gluing along a Clique} 

In section \ref{section:substitution}, we proved that the closure of a $\chi$-bounded class under substitution is $\chi$-bounded (see Theorem \ref{general bound}, as well as Theorem \ref{poly} and Theorem \ref{exp} for a strengthening of Theorem \ref{general bound} in some special cases), and in this section, we proved an analogous result for gluing along a clique (see Proposition \ref{gluing clique only}). In this subsection, we discuss ``combinations'' of these two operations. Given a class $\mathcal{G}$ of graphs, we denote by $\mathcal{G}^\#$ the closure of $\mathcal{G}$ under substitution and gluing along a clique. Our main goal is to prove the following proposition. 
\begin{prop} \label{closure} Let $\mathcal{G}$ be a class of graphs, $\chi$-bounded by a non-decreasing function $f:\mathbb{N} \rightarrow \mathbb{R}$. Then $\mathcal{G}^\#$ is $\chi$-bounded by the function $g(k) = f(k)^k$. 
\end{prop}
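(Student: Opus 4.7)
\noindent The plan is to reduce Proposition \ref{closure} to Theorem \ref{general bound} by establishing the equality $\mathcal{G}^\# = (\mathcal{G}_{cl})^*$, where $\mathcal{G}_{cl}$ denotes the closure of $\mathcal{G}$ under gluing along a clique. As usual, we may assume $\mathcal{G}$ is hereditary. By Proposition \ref{gluing clique only}, $\mathcal{G}_{cl}$ is $\chi$-bounded by $f$, and then by Theorem \ref{general bound}, $(\mathcal{G}_{cl})^*$ is $\chi$-bounded by $g(k) = f(k)^k$. So once $\mathcal{G}^\# = (\mathcal{G}_{cl})^*$ is established, we are done.

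\noindent The containment $(\mathcal{G}_{cl})^* \subseteq \mathcal{G}^\#$ is immediate since $\mathcal{G}_{cl} \subseteq \mathcal{G}^\#$ and $\mathcal{G}^\#$ is closed under substitution. For the reverse inclusion, since $(\mathcal{G}_{cl})^*$ contains $\mathcal{G}$ and is closed under substitution by construction, it suffices to prove the following switching lemma (analogous to Lemma \ref{switch order}): \emph{$(\mathcal{G}_{cl})^*$ is closed under gluing along a clique.} I would prove this by induction on $|V_G|$, where $G = G_1 \cup_C G_2$ is a gluing of $G_1, G_2 \in (\mathcal{G}_{cl})^*$ along the clique $C = V_{G_1}\cap V_{G_2}$. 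If $G_1, G_2 \in \mathcal{G}_{cl}$, the result follows from closure of $\mathcal{G}_{cl}$ under gluing along a clique. Otherwise, by symmetry assume $G_1 \in (\mathcal{G}_{cl})^* \smallsetminus \mathcal{G}_{cl}$, so $G_1$ can be written as a simultaneous substitution of graphs $H_1,\ldots,H_t \in (\mathcal{G}_{cl})^*$ into a base $F \in \mathcal{G}_{cl}$ with $t \geq 2$. The clique $C$ meets each $V_{H_i}$ in a (possibly empty) clique $C_i$, and $I = \{i : C_i \neq \emptyset\}$ indexes a clique $\{u_i : i \in I\}$ of $F$. When $|I| = 1$, say $I = \{1\}$, and $u_1$ is isolated in $F$, the set $V_{H_1}$ is anti-complete to the rest of $G_1$, so $G$ decomposes as the disjoint union of $H_1 \cup_C G_2$ (which is in $(\mathcal{G}_{cl})^*$ by induction) and $G_1[\bigcup_{i\neq 1} V_{H_i}]$, and we are done.

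\noindent The main obstacle is the remaining situation in which $|I| = 1$ with $u_1$ non-isolated, or $|I| \geq 2$: the naive move of substituting $H_1 \cup_C G_2$ for $u_1$ in $F$ fails because it would force $V_{G_2} \smallsetminus C$ to be complete to every $V_{H_j}$ with $u_1 u_j \in E(F)$, whereas in $G$ these sets are anti-complete. To get around this I would exploit the clique-gluing structure of $F$ itself, writing $F = F_1 \cup_{K_F} F_2$ with $F_1, F_2 \in \mathcal{G}_{cl}$, and argue by cases on the location of the $u_i$ (for $i \in I$) relative to $K_F$: when they all lie on one side, say in $V_{F_1}$, substitute the appropriate $H_i$'s into $F_1$ first, glue the result to $G_2$ along $C$ (using induction, since the resulting graph is strictly smaller than $G$), and finally glue back to the substituted $F_2$ along the unchanged clique $K_F$; when the $u_i$ straddle $K_F$, use that $\{u_i : i \in I\}$ is a clique of $F$ to show that $C$ can be further split along a sub-clique of $K_F$ and handled recursively on each side. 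Since the outermost operation in the final construction is always either a substitution (into a graph already known to be in $(\mathcal{G}_{cl})^*$) or a gluing along a clique of strictly smaller graphs, the induction closes, yielding $G \in (\mathcal{G}_{cl})^*$.
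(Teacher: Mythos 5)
Your reduction rests on the claim that $\mathcal{G}^\# = (\mathcal{G}_{cl})^*$, but this equality is false, so the proposal has a genuine gap that cannot be repaired along these lines. Here is a concrete counterexample. Let $\mathcal{G}$ be the hereditary class of all complete graphs (together with the empty graph), so that $\mathcal{G}_{cl}$ is the class of chordal graphs. Let $F$ be the path $w - u - x$ and substitute the path $P_4 = a - b - c - d$ for $u$, obtaining $G_1 \in \mathcal{G}^\#$ with $w$ and $x$ each complete to $\{a,b,c,d\}$, $wx$ a non-edge, and $abcd$ a path. Now glue $G_1$ to $G_2 = K_4$ on $\{w,a,b,e\}$ along the clique $C = \{w,a,b\}$, obtaining $G \in \mathcal{G}^\#$. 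One checks that $G$ is prime: starting from any pair of vertices, adding the vertices that distinguish that pair forces the putative module to grow to all of $V_G$ (for instance, from $\{w,x\}$ one is forced to add $e$, then $c,d$, then $a$, then $b$). On the other hand $G$ contains the chordless four-cycle $w - a - x - d - w$ (since $wx$ and $ad$ are non-edges), so $G$ is not chordal. But any prime, non-empty graph in $(\mathcal{G}_{cl})^*$ must lie in $(\mathcal{G}_{cl})^+ = \mathcal{G}_{cl}$ (a non-trivial substitution produces a non-trivial module), so $G \notin (\mathcal{G}_{cl})^*$. Thus $(\mathcal{G}_{cl})^* \subsetneq \mathcal{G}^\#$.

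The structural reason your switching lemma fails, and why the analogy with Lemma \ref{switch order} breaks down, is visible in your own case analysis: when the gluing clique $C$ meets the image of a vertex $u$ of the base graph $F$ that itself belongs to the gluing clique $K_F$ of $F$, the substituted graph $H_u$ need not be a clique, so after substitution there is no clique ``$K_F$'' left to glue back along. In the example above $K_F=\{u\}$ gets replaced by $P_4$, which is not a clique. The paper avoids this entirely by not attempting to commute the two closures. Instead, it proves (Lemma \ref{gluing bound}, via Lemmas \ref{decomposition} and \ref{gluing reduction}) that if a hereditary class $\mathcal{H}$ is closed under substitution, then every $G\in\mathcal{H}^\#$ contains an induced subgraph $G'\in\mathcal{H}$ with $\chi(G')=\chi(G)$ --- so passing from $\mathcal{H}$ to $\mathcal{H}^\#$ does not change the $\chi$-bounding function at all. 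Applying this to $\mathcal{H}=\mathcal{G}^*$ (which is $\chi$-bounded by $g(k)=f(k)^k$ by Theorem \ref{general bound}) gives the result, without ever claiming membership of $G$ itself in a smaller class.
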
 
\noindent 
We note that, as in section \ref{section:substitution}, we can obtain a strengthening of Proposition \ref{closure} in the case when the $\chi$-bounding function for the class $\mathcal{G}$ is polynomial or exponential (see Proposition \ref{gluing polyexp}). The main ``ingredient'' in the proof of Proposition \ref{closure} is the following lemma. 
\begin{lemma} \label{gluing bound} Let $\mathcal{G}$ be a hereditary class, closed under substitution. Assume that $\mathcal{G}$ is $\chi$-bounded by a non-decreasing function $f:\mathbb{N} \rightarrow \mathbb{R}$. Then $\mathcal{G}^\#$ is $\chi$-bounded by $f$. 
\end{lemma}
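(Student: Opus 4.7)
The plan is to prove $\chi(G) \le f(\omega(G))$ for every $G \in \mathcal{G}^\#$ by induction on $|V_G|$. The base case of the empty graph is immediate (taking $f(0) \geq 0$ without loss of generality), so I focus on the inductive step.

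If $G \in \mathcal{G}$, the bound is given by hypothesis. If $G$ has a (possibly empty) clique cutset $C$, I would write $G$ as the gluing along $C$ of two proper induced subgraphs $G_1, G_2$, both of which lie in $\mathcal{G}^\#$ (which is hereditary, since substitution and gluing along a clique preserve hereditariness) and have strictly fewer vertices than $G$. The induction hypothesis gives $\chi(G_i) \le f(\omega(G_i)) \le f(\omega(G))$ for $i = 1,2$, and permuting the colors of $G_2$ so that its restriction to $C$ agrees with that of $G_1$ (which is possible since $|C| \le \omega(G_i) \le \chi(G_i)$) combines the two colorings into a proper coloring of $G$ using at most $\max\{\chi(G_1),\chi(G_2)\} \le f(\omega(G))$ colors. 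This is exactly the clique-gluing argument behind Proposition~\ref{gluing clique only}.

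The delicate case is when $G \in \mathcal{G}^\# \smallsetminus \mathcal{G}$ has no clique cutset. I would first observe that such a $G$ cannot have been produced by gluing along a clique at the top level of its construction: the inclusion-wise incomparable vertex sets in the gluing definition force both $V_{G_1}\smallsetminus C$ and $V_{G_2}\smallsetminus C$ to be non-empty, so the glue-clique $C$ would be a clique cutset of $G$, contradicting the assumption. Hence $G$ must have been obtained by substitution: there exist $G_0 \in \mathcal{G}^\#$ with vertex set $\{v_1,\ldots,v_t\}$ (with $2 \le t \le |V_G|-1$) and non-empty graphs $G_1,\ldots,G_t \in \mathcal{G}^\#$, each strictly smaller than $G$, such that $G$ is obtained by substituting $G_1,\ldots,G_t$ for $v_1,\ldots,v_t$ in $G_0$. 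The induction hypothesis then yields $\chi(G_0) \le f(\omega(G_0)) \le f(\omega(G))$ and $\chi(G_i) \le f(\omega(G_i)) \le f(\omega(G))$ for every $i$.

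The core of the argument is to use the closure of $\mathcal{G}$ under substitution to conclude $\chi(G) \le f(\omega(G))$ from these piecewise bounds. The plan is to apply the induction hypothesis recursively to the pieces $G_0, G_1, \ldots, G_t$ and, using the clique-cutset decompositions of any pieces that lie in $\mathcal{G}^\# \smallsetminus \mathcal{G}$ together with associativity of substitution, reorganize the construction so that the outer substitution is taken over graphs in $\mathcal{G}$; then $G \in \mathcal{G}$ by the closure hypothesis and the bound follows. I expect the main obstacle to be exactly this reorganization step: substituting non-clique graphs at vertices of $G_0$ lying in clique cutsets of $G_0$ can ``hide'' those cutsets inside $G$ (so that $G$ may fail to have a clique cutset even though its constructing pieces do), and one must track how the no-clique-cutset hypothesis on $G$ constrains the interaction between the substitution operation and the clique cutsets of the pieces in order to absorb everything cleanly into $\mathcal{G}$.
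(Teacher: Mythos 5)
Your first two cases are sound: if $G \in \mathcal{G}$ the bound is immediate, and if $G$ has a clique cutset $C$ then the standard color-permutation argument combines optimal colorings of the two sides, both of which lie in $\mathcal{G}^\#$ since $\mathcal{G}^\#$ is hereditary. The gap is in your third case. You hope that when $G \in \mathcal{G}^\# \smallsetminus \mathcal{G}$ has no clique cutset, the construction can be reorganized so as to ``absorb everything cleanly into $\mathcal{G}$,'' i.e.\ so as to conclude $G \in \mathcal{G}$. That conclusion is false. Take $\mathcal{G}$ to be the class of cographs (graphs with no induced four-vertex path), which is hereditary and closed under substitution. The four-vertex path $a$--$b$--$c$--$d$ lies in $\mathcal{G}^\#$, being the gluing of the cographs $a$--$b$--$c$ and $c$--$d$ along the clique $\{c\}$. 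Substitute the two-vertex edgeless graph for both $b$ and $c$; the result $G$ has vertices $a,b_1,b_2,c_1,c_2,d$ and edges $ab_1$, $ab_2$, $b_ic_j$ for $i,j \in \{1,2\}$, $c_1d$, $c_2d$. Then $G \in \mathcal{G}^\#$, $G$ is triangle-free, one checks directly that no vertex and no edge is a cutset, so $G$ has no clique cutset; yet $a$--$b_1$--$c_1$--$d$ is an induced four-vertex path, so $G \notin \mathcal{G}$. Thus the obstacle you flag at the end (substitution ``hiding'' the clique cutsets of the pieces) is not merely a technical annoyance to be tracked: it defeats the plan of reducing the no-cutset case to membership in $\mathcal{G}$.

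The paper avoids this by proving something weaker but sufficient. Lemma~\ref{gluing reduction} shows that every $G \in \mathcal{G}^\#$ has an \emph{induced subgraph} $G' \in \mathcal{G}$ with $\chi(G') = \chi(G)$; this gives $\chi(G) = \chi(G') \leq f(\omega(G')) \leq f(\omega(G))$ since $f$ is non-decreasing, with no need for $G$ itself to lie in $\mathcal{G}$. The reduction is driven by Lemma~\ref{decomposition}: if $G \notin \mathcal{G}$ then $G$ contains a homogeneous set $S$ such that $G[S]$ is an expansion of a graph obtained by gluing $H_0$ and $K_0$ along a clique $C$; expanding only $H_0$ (resp.\ $K_0$) produces proper induced subgraphs $H$ (resp.\ $K$) of $G[S]$, and the technical heart of the argument shows $\chi(G[S]) = \max\{\chi(H),\chi(K)\}$, so that $G[S]$ can be replaced inside its homogeneous position by the smaller of $H,K$ of the same chromatic number, allowing the induction to proceed. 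Proving that equality requires choosing optimal colorings of $H$ and $K$ that agree on the expansion $\tilde C$ of the gluing clique: since the pieces $V_{C_1},\dots,V_{C_r}$ substituted at $C$ are pairwise complete, any proper coloring uses disjoint color sets on them, and one can recolor so that each $V_{C_i}$ uses exactly $\chi(C_i)$ colors in both $H$ and $K$, after which a relabeling makes the two colorings compatible. This replacement idea, rather than any absorption of $G$ into $\mathcal{G}$, is the missing ingredient your proposal needs.
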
 
\noindent 
In view of the results of section \ref{section:substitution}, Lemma \ref{gluing bound} easily implies Proposition \ref{closure} and Proposition \ref{gluing polyexp} (see the proof of these two theorems at the end of this section). The idea of the proof of Lemma \ref{gluing bound} is as follows. We first prove a certain structural result for graphs in the class $\mathcal{G}^\#$, where $\mathcal{G}$ is a hereditary class, closed under substitution (see Lemma \ref{decomposition}). We then use Lemma \ref{decomposition} to show that if $\mathcal{G}$ is a hereditary class, closed under substitution, then for every graph $G \in \mathcal{G}^\#$, there exists a graph $G' \in \mathcal{G}$ such that $G'$ is an induced subgraph of $G$ and $\chi(G') = \chi(G)$ (see Lemma \ref{gluing reduction}). Finally, Lemma \ref{gluing reduction} easily implies Lemma \ref{gluing bound}. 
\\
\\
We begin with some definitions. Let $\mathcal{G}$ be a hereditary class. Given non-empty graphs $G,G_0 \in \mathcal{G}^\#$ with $V_{G_0} = \{v_1,...,v_t\}$, we say that $G$ is an {\em expansion} of $G_0$ provided that there exist non-empty graphs $G_1,...,G_t \in \mathcal{G}^\#$ with pairwise disjoint vertex sets such that $G$ is obtained by substituting $G_1,...,G_t$ for $v_1,..,v_t$ in $G_0$. We observe that every non-empty graph in $\mathcal{G}^\#$ is an expansion of itself. We say that a non-empty graph $G \in \mathcal{G}^\#$ is {\em decomposable} provided that there exists a non-empty graph $G' \in \mathcal{G}^\#$ such that $G$ is an expansion of $G'$, and there exist non-empty graphs $H,K \in \mathcal{G}^\#$ with inclusion-wise incomparable vertex sets such that $G'$ can be obtained by gluing $H$ and $K$ along a clique. We now prove a structural result for graphs in $\mathcal{G}^\#$, when $\mathcal{G}$ is a hereditary class, closed under substitution.  
\begin{lemma} \label{decomposition} Let $\mathcal{G}$ be a hereditary class, closed under substitution. Then for every graph $G \in \mathcal{G}^\#$, either $G \in \mathcal{G}$, or there exists a non-empty set $S \subseteq V_G$ such that $S$ is a homogeneous set in $G$ and $G[S]$ is decomposable. 
\end{lemma}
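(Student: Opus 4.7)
The plan is to proceed by strong induction on $|V_G|$. The cases $|V_G| \leq 1$ are immediate from hereditariness, after noting that if $\mathcal{G}$ contains no non-empty graph then $\mathcal{G}^\# = \mathcal{G} = \{\emptyset\}$ and the lemma is vacuous. For the inductive step, I consider $G \in \mathcal{G}^\# \setminus \mathcal{G}$ and fix a shortest construction of $G$ from members of $\mathcal{G}$ using substitution and gluing along a clique. Trivial substitutions (one-vertex base, or all substituted graphs being trivial) can be removed from the sequence without changing the resulting graph, so the final operation is either a gluing along a clique or a substitution in which $t \geq 2$ and at least one substituted graph has at least two vertices.

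If the final operation is gluing of some $H$ and $K$ with inclusion-wise incomparable vertex sets along a clique, then $G$ is an expansion of itself via trivial substitutions, so $G$ itself witnesses that $G$ is decomposable, and I take $S = V_G$, which is always homogeneous in $G$.

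If instead the final operation is a non-trivial substitution of non-empty graphs $G_1,\dots,G_t \in \mathcal{G}^\#$ for the vertices $v_1,\dots,v_t$ of some $G_0 \in \mathcal{G}^\#$, then $|V_{G_0}|$ and each $|V_{G_i}|$ are strictly less than $|V_G|$. Since $\mathcal{G}$ is closed under substitution and $G \notin \mathcal{G}$, either some $G_i \notin \mathcal{G}$ or else $G_0 \notin \mathcal{G}$. In the first subcase I apply the induction hypothesis to $G_i$ to get a non-empty homogeneous set $S_i$ in $G_i$ with $G_i[S_i]$ decomposable; the substitution rule makes $S_i$ homogeneous in $G$ (vertices inside $V_{G_i}$ inherit the property from $G_i$, vertices outside are uniformly complete or anti-complete to $V_{G_i}$), and $G[S_i] = G_i[S_i]$ is decomposable, so I am done. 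In the second subcase I apply the induction hypothesis to $G_0$ to get a non-empty homogeneous set $S_0$ in $G_0$ with $G_0[S_0]$ decomposable, and set $S = \bigcup_{v_i \in S_0} V_{G_i}$; this $S$ is homogeneous in $G$ because every outside block $V_{G_j}$ is uniformly complete or anti-complete to $S$ depending on whether $v_j$ is complete or anti-complete to $S_0$ in $G_0$.

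The main obstacle, and the only step that requires care, is showing that $G[S]$ is itself decomposable in the second subcase. Writing $G_0[S_0]$ as an expansion of some $G_0' \in \mathcal{G}^\#$ obtained by gluing along a clique, say by substituting graphs $H_1,\dots,H_s \in \mathcal{G}^\#$ for the vertices $u_1,\dots,u_s$ of $G_0'$, I will compose this with the expansion of $G_0[S_0]$ into $G[S]$: for each $j$, let $H_j'$ be the graph obtained by substituting the $G_i$'s (for $v_i \in V_{H_j}$) for the corresponding vertices of $H_j$. Closure of $\mathcal{G}^\#$ under substitution gives $H_j' \in \mathcal{G}^\#$, and a routine check of the adjacency rules at the block level shows that substituting $H_1',\dots,H_s'$ for $u_1,\dots,u_s$ in $G_0'$ produces precisely $G[S]$. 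Hence $G[S]$ is an expansion of the clique-glued graph $G_0'$, i.e., $G[S]$ is decomposable, completing the induction.
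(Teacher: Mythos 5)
Your proof is correct and follows essentially the same strategy as the paper's: induct on $|V_G|$, examine the last operation in a construction of $G$, take $S = V_G$ if it is a gluing, and otherwise reduce to the piece of the substitution that is not in $\mathcal{G}$. The only cosmetic differences are that you phrase the substitution case using simultaneous substitution (the paper substitutes a single graph $G_2$ for a vertex $u$ of $G_1$, then splits on whether $u \in S_1$), and that you spell out the ``composition of expansions'' argument for why $G[S]$ remains decomposable, a point the paper dispatches with ``it is easy to see.''
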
 
\begin{proof} 
Let $G \in \mathcal{G}^\#$, and assume inductively that the claim holds for every graph in $\mathcal{G}^\#$ with fewer than $|V_G|$ vertices. If $G \in \mathcal{G}$, then we are done. So assume that $G \in \mathcal{G}^\# \smallsetminus \mathcal{G}$. If $G$ can be obtained from two graphs in $\mathcal{G}^\#$, each with fewer than $|V_G|$ vertices, by gluing along a clique, then $G$ is decomposable, and we are done. So assume that this is not the case. Then there exist non-empty graphs $G_1,G_2 \in \mathcal{G}^\#$ such that $V_{G_1} \cap V_{G_2} = \emptyset$ and $|V_{G_i}| < |V_G|$ for each $i \in \{1,2\}$, and a vertex $u \in V_{G_1}$, such that $G$ is obtained by substituting $G_2$ for $u$ in $G_1$. 
\\
\\
By the induction hypothesis, either $G_2 \in \mathcal{G}$ or there exists a homogeneous set $S_2 \subseteq V_{G_2}$ in $G_2$ such that $G_2[S_2]$ is decomposable. In the latter case, it easy to see that the set $S_2$ is a homogeneous set in $G$ as well, and that $G[S_2]$ is decomposable. So from now on, we assume that $G_2 \in \mathcal{G}$. Now, if $G_1 \in \mathcal{G}$, then since $G_2 \in \mathcal{G}$ and $\mathcal{G}$ is closed under substitution, we get that $G \in \mathcal{G}$, which is a contradiction. Thus, $G_1 \notin \mathcal{G}$. By the induction hypothesis then, there exists a non-empty set $S_1 \subseteq V_{G_1}$ such that $S_1$ is a homogeneous set in $G_1$ and $G_1[S_1]$ is decomposable. If $u \notin S_1$, then it is easy to see that $S_1$ is a homogeneous set in $G$ and that $G[S_1]$ is decomposable. So assume that $u \in S_1$. Set $S = (S_1 \smallsetminus \{u\}) \cup V_{G_2}$. Clearly, $S$ is a homogeneous set in $G$ (as $S_1$ is a homogeneous set in $G_1$). Further, $G[S]$ is obtained by substituting $G_2$ for $u$ in the decomposable graph $G_1[S_1]$, and so it is easy to see that $G[S]$ is decomposable. This completes the argument. 
\end{proof} 
\begin{lemma} \label{gluing reduction} Let $\mathcal{G}$ be a hereditary class, closed under substitution. Then for all $G \in \mathcal{G}^\#$, there exists a graph $G' \in \mathcal{G}$ such that $G'$ is an induced subgraph of $G$ and $\chi(G') = \chi(G)$. 
\end{lemma}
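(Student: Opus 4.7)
The plan is to proceed by induction on $|V_G|$. The empty graph and the case $G\in\mathcal{G}$ are immediate, with $G'=G$. So suppose $G\in\mathcal{G}^\#\smallsetminus\mathcal{G}$. By Lemma \ref{decomposition} there is a non-empty homogeneous set $S\subseteq V_G$ such that $G[S]$ is decomposable: unfolding the definition, $G[S]$ is obtained by substituting non-empty graphs $B_1,\dots,B_t\in\mathcal{G}^\#$ for the vertices $v_1,\dots,v_t$ of some $G'_0\in\mathcal{G}^\#$, and $G'_0$ is itself obtained by gluing non-empty $H_0,K_0\in\mathcal{G}^\#$ (with inclusion-wise incomparable vertex sets) along the clique $C=V_{H_0}\cap V_{K_0}$. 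Let $H^*$, $K^*$, and $C^*$ denote the induced subgraphs of $G[S]$ obtained by taking only the substitutions at the vertices of $V_{H_0}$, $V_{K_0}$, and $C$, respectively; the incomparability of $V_{H_0}$ and $V_{K_0}$, together with the non-emptiness of each $B_i$, ensures that $V_{H^*}\smallsetminus V_{C^*}$ and $V_{K^*}\smallsetminus V_{C^*}$ are both non-empty.

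The heart of the argument is the identity $\chi(G[S])=\max(\chi(H^*),\chi(K^*))$. To prove it, I would invoke the standard substitution--chromatic formula: if a graph $F$ is built by substituting graphs $B_1,\dots,B_t$ for the vertices $v_1,\dots,v_t$ of $F_0$, then $\chi(F)$ equals the chromatic number of the graph $F_0^+$ obtained from $F_0$ by replacing each $v_i$ with a clique on $\chi(B_i)$ vertices all complete to $N_{F_0}(v_i)$. The forward direction colors each $B_i$ optimally using the $\chi(B_i)$ colors already placed on its replacement clique; the reverse picks one representative from each color class appearing on $V_{B_i}$ in an optimal coloring of $F$. Applying this formula to $G[S]$, $H^*$, and $K^*$ (with the same $B_i$'s), the identity becomes $\chi((G'_0)^+)=\max(\chi(H_0^+),\chi(K_0^+))$. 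But $(G'_0)^+$ is precisely the gluing of $H_0^+$ and $K_0^+$ along $C^+$, where $C^+$ is the union of the replacement cliques for the vertices of $C$; and $C^+$ is itself a clique because $C$ is a clique in $G'_0$. The required equality is then immediate from Proposition \ref{gluing clique only}.

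With the identity in hand, assume without loss of generality that $\chi(H^*)=\chi(G[S])$ and set $G^\dagger=G\smallsetminus(V_{K^*}\smallsetminus V_{C^*})$. Then $G^\dagger$ is a proper induced subgraph of $G$ and lies in $\mathcal{G}^\#$ (which is hereditary). Since $V_{G^\dagger}\smallsetminus V_{H^*}=V_G\smallsetminus S$ and every vertex in $V_G\smallsetminus S$ is complete or anti-complete to $S$, and hence to $V_{H^*}\subseteq S$, the set $V_{H^*}$ is homogeneous in $G^\dagger$ with the same external attachment that $S$ had in $G$. Thus $G$ and $G^\dagger$ are both substitutions into a common quotient graph, differing only in whether the substituted module is $G[S]$ or $H^*$. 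Applying the substitution--chromatic formula once more to this outer substitution, and using $\chi(G[S])=\chi(H^*)$, yields $\chi(G)=\chi(G^\dagger)$. The inductive hypothesis applied to the strictly smaller $G^\dagger$ then delivers an induced subgraph $G'\in\mathcal{G}$ of $G^\dagger$, and hence of $G$, with $\chi(G')=\chi(G^\dagger)=\chi(G)$. The hard part is the identity $\chi(G[S])=\max(\chi(H^*),\chi(K^*))$: once the substitution--chromatic formula is available to trade each substituted module for a clique of the right size, the problem reduces to the clique-gluing fact of Proposition \ref{gluing clique only}, and the rest of the induction is routine.
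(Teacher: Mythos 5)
Your proof is correct and follows the paper's overall skeleton exactly: induct on $|V_G|$, apply Lemma~\ref{decomposition} to find a homogeneous set $S$ with $G[S]$ decomposable, split $G[S]$ into the two ``sides'' $H^*$ and $K^*$ of the underlying clique-gluing, establish $\chi(G[S]) = \max\{\chi(H^*), \chi(K^*)\}$, replace the module $G[S]$ by whichever side attains the max (your $G^\dagger$ is exactly the paper's $G_H$), and invoke the inductive hypothesis on the strictly smaller graph.

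Where you diverge is in how the key identity $\chi(G[S]) = \max\{\chi(H^*),\chi(K^*)\}$ is proved. The paper constructs the required coloring by hand: it takes optimal colorings $b_H$ of $H$ and $b_K$ of $K$, adjusts them so that each uses exactly $\chi(C_i)$ colors on $V_{C_i}$ with the color sets on distinct $C_i$'s disjoint (exploiting that the $V_{C_i}$ are homogeneous and pairwise complete), relabels to make the two colorings agree on $\tilde C$, and merges. You instead invoke the folklore substitution--chromatic formula (chromatic number of a modular substitution equals that of the graph where each module is replaced by a clique of size equal to its chromatic number); this turns the identity into the statement $\chi((G_0')^+) = \max\{\chi(H_0^+),\chi(K_0^+)\}$, which you observe is just the elementary fact that gluing along a clique preserves the max of the chromatic numbers --- i.e.\ the one-line observation in the proof of Proposition~\ref{gluing clique only}, not the proposition itself (a small mislabel worth fixing). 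The same formula also handles the outer step $\chi(G)=\chi(G^\dagger)$, which the paper dispatches as ``easy to see.'' Your route is more modular and conceptually cleaner --- it isolates the general principle, then applies it twice --- at the cost of a quick argument (sketched by you and correct) that the formula holds; the paper's route is more self-contained, proving exactly what is needed inline without stating the general tool. One minor imprecision in your sketch of the formula's ``reverse direction'': picking one representative vertex per color class on $V_{B_i}$ does not produce a subgraph isomorphic to $(F_0)^+$ (the representatives need not form a clique in $B_i$); the correct move is to assign the set of colors used on $V_{B_i}$ (a set of size $\geq \chi(B_i)$) to the $i$-th replacement clique, which gives a proper coloring of $(F_0)^+$ since adjacent modules see disjoint color sets. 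The idea is clear enough, but the phrasing should be tightened.
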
 
\begin{proof} 
Fix a graph $G \in \mathcal{G}^\#$, and assume inductively that the claim holds for every graph in $\mathcal{G}^\#$ that has fewer than $|V_G|$ vertices. If $G \in \mathcal{G}$, then the result is immediate; so assume that $G \notin \mathcal{G}$. Then by Lemma \ref{decomposition}, there exists a non-empty set $S \subseteq V_G$ such that $S$ is a homogeneous set in $G$ and $G[S]$ is decomposable. 
\\
\\
Since $G[S]$ is decomposable, there exist graphs $G_0,H_0,K_0 \in \mathcal{G}^\#$ such that $H_0$ and $K_0$ have inclusion-wise incomparable vertex sets, such that $G_0$ can be obtained by gluing $H_0$ and $K_0$ along a clique, and such that $G[S]$ is an expansion of $G_0$. Set $C = V_{H_0} \cap V_{K_0}$; then $C$ is a clique in both $H_0$ and $K_0$, and $G_0$ is obtained by gluing $H_0$ and $K_0$ along $C$. Set $C = \{c_1,...,c_r\}$, $V_{H_0} \smallsetminus C = \{h_1,...,h_s\}$, and $V_{K_0} \smallsetminus C = \{k_1,...,k_t\}$. Let $C_1,...,C_r,H_1,...,H_s,K_1,...,K_t$ be non-empty graphs with pairwise disjoint vertex sets such that $G[S]$ is obtained by substituting $C_1,...,C_r,H_1,...,H_s,K_1,...,K_s$ for $c_1,...,c_r,h_1,...,h_s,k_1,...,k_t$ in $G_0$. Set $\tilde{C} = \bigcup_{i=1}^r V_{C_i}$. Let $H$ be the graph obtained by substituting $C_1,...,C_r,H_1,...,H_s$ for $c_1,...,c_r,h_1,...,h_s$ in $H_0$; and let $K$ be the graph obtained by substituting $C_1,...,C_r,K_1,...,K_t$ for $c_1,...,c_r,k_1,...,k_t$ in $K_0$. Clearly, both $H$ and $K$ are proper induced subgraphs of $G[S]$. Our goal is to show that $\chi(G[S]) = \max\{\chi(H),\chi(K)\}$. Since $H$ and $K$ are induced subgraphs of $G[S]$, it suffices to show that $\chi(G[S]) \leq \max\{\chi(H),\chi(K)\}$. 
\\
\\
Let $b_H':V_H \rightarrow \{1,...,\chi(H)\}$ be an optimal coloring of $H$. Since $V_{C_1},...,V_{C_r}$ are pairwise disjoint and complete to each other, we know that $b_H'$ uses pairwise disjoint color sets on these sets. Now, let $b_H:V_H \rightarrow \{1,...,\chi(H)\}$ be defined as follows: for all $v \in V_H \smallsetminus \tilde{C}$, set $b_H(v) = b_H'(v)$, and for all $i \in \{1,...,r\}$, assume that $b_H \upharpoonright V_{C_i}$ is an optimal coloring of $C_i$ using only the colors from $b_H'[C_i]$. As $V_{C_1},...,V_{C_r}$ are homogeneous sets in $H$, and $b_H[C_i] \subseteq b_H'[C_i]$ for all $i \in \{1,...,r\}$, it easily follows that $b_H$ is a proper coloring of $H$. Now, note that $b_H:V_H \rightarrow \{1,...,\chi(H)\}$ is an optimal coloring of $H$, $b_H[V_{C_1}],...,b_H[V_{C_r}]$ are pairwise disjoint, and for each $i \in \{1,...,r\}$, $|b_H[V_{C_i}]| = \chi(C_i)$. Similarly, there exists an optimal coloring $b_K:V_K \rightarrow \{1,...,\chi(K)\}$ of $K$ such that $b_K[V_{C_1}],...,b_K[V_{C_r}]$ are pairwise disjoint, and for each $i \in \{1,...,r\}$, $|b_K[V_{C_i}]| = \chi(C_i)$. Relabeling if necessary, we may assume that $b_H \upharpoonright \tilde{C} = b_K \upharpoonright \tilde{C}$; as $V_H \cap V_K = \tilde{C}$, we can define $b_S:S \rightarrow \{1,...,\max\{\chi(H),\chi(K)\}\}$ by setting 
\begin{displaymath} 
\begin{array}{rcl} 
b_S(v) & = & 
\left\{ \begin{array}{lcl} 
b_H(v) & {\rm if} & v \in V_H 
\\
b_K(v) & {\rm if} & v \in V_K 
\end{array}\right. 
\end{array} 
\end{displaymath} 
Since $V_H \smallsetminus \tilde{C}$ is anti-complete to $V_K \smallsetminus \tilde{C}$ in $G[S]$, this is a proper coloring of $G[S]$. It follows that $\chi(G[S]) = \max\{\chi(H),\chi(K)\}$. By symmetry, we may assume that $\chi(K) \leq \chi(H)$, so that $\chi(G[S]) = \chi(H)$. 
\\
\\
Now, since $S$ is a homogeneous set in $G$, there exists a graph $\tilde{G} \in \mathcal{G}^\#$ such that $V_{\tilde{G}} \cap S = \emptyset$, and a vertex $u \in V_{\tilde{G}}$ such that $G$ is obtained by substituting $G[S]$ for $u$ in $\tilde{G}$. Let $G_H$ be the graph obtained by substituting $H$ for $u$ in $\tilde{G}$. Since $\chi(G[S]) = \chi(H)$, it is easy to see that $\chi(G[S]) = \chi(G_H)$. Since $H$ is a proper induced subgraph of $G[S]$, we have that $G_H$ is a proper induced subgraph of $G$. By the induction hypothesis then, there exists a graph $G' \in \mathcal{G}$ such that $G'$ is an induced subgraph of $G_H$ and $\chi(G') = \chi(G_H)$. But then $G' \in \mathcal{G}$ is an induced subgraph of $G$ and $\chi(G') = \chi(G)$. This completes the argument. 
\end{proof} 
\noindent 
We are now ready to prove Lemma \ref{gluing bound}, restated below. 
\begin{gluing bound} Let $\mathcal{G}$ be a hereditary class, closed under substitution. Assume that $\mathcal{G}$ is $\chi$-bounded by a non-decreasing function $f:\mathbb{N} \rightarrow \mathbb{R}$. Then $\mathcal{G}^\#$ is $\chi$-bounded by $f$. 
\end{gluing bound} 
\begin{proof} 
Fix $G \in \mathcal{G}^\#$. By Lemma \ref{gluing reduction}, there exists a graph $G' \in \mathcal{G}$ such that $G'$ is an induced subgraph of $G$ and $\chi(G') = \chi(G)$. Since $G'$ is an induced subgraph of $G$, we know that $\omega(G') \leq \omega(G)$. Since $G' \in \mathcal{G}$ and $\mathcal{G}$ is $\chi$-bounded by $f$, we have that $\chi(G') \leq f(\omega(G'))$. Finally, since $\omega(G') \leq \omega(G)$ and $f$ is non-decreasing, $f(\omega(G')) \leq f(\omega(G))$. Now we have the following: 
\begin{displaymath} 
\begin{array}{ccccccc} 
\chi(G) & = & \chi(G') & \leq & f(\omega(G')) & \leq & f(\omega(G)). 
\end{array} 
\end{displaymath} 
It follows that $\mathcal{G}^\#$ is $\chi$-bounded by $f$. 
\end{proof} 
\noindent 
We now use Lemma \ref{gluing bound} and the results of section \ref{section:substitution}, in order to prove Proposition \ref{closure} (restated below), as well as Proposition \ref{gluing polyexp}, which is a strengthening of Proposition \ref{closure} in some special cases. 
\begin{closure} Let $\mathcal{G}$ be a class of graphs, $\chi$-bounded by a non-decreasing function $f:\mathbb{N} \rightarrow \mathbb{R}$. Then $\mathcal{G}^\#$ is $\chi$-bounded by the function $g(k) = f(k)^k$. 
\end{closure}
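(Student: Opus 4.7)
The plan is to combine Theorem \ref{general bound} with Lemma \ref{gluing bound}, using the closure $\mathcal{G}^*$ (under disjoint union and substitution) as an intermediate object. The two key results essentially fit together end-to-end: Theorem \ref{general bound} produces a $\chi$-bounding function for the substitution closure, and Lemma \ref{gluing bound} says that once we are inside a class that is already closed under substitution, adding gluing along a clique does not make the $\chi$-bounding function any larger.

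First, I would reduce to the case in which $\mathcal{G}$ is hereditary by replacing $\mathcal{G}$ with its hereditary closure $\tilde{\mathcal{G}}$ (the closure under isomorphism and induced subgraphs); this class is still $\chi$-bounded by $f$, and $\mathcal{G}^\# \subseteq \tilde{\mathcal{G}}^\#$, so a $\chi$-bound for $\tilde{\mathcal{G}}^\#$ gives one for $\mathcal{G}^\#$. I would also, as in the proof of Theorem \ref{general bound}, assume without loss of generality that $f(0) \geq 0$ and $f(k) \geq 1$ for every $k \in \mathbb{Z}^+$, so that $g(k) = f(k)^k$ is itself non-decreasing (this is what will allow me to feed $g$ back into Lemma \ref{gluing bound} as a non-decreasing $\chi$-bounding function).

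Next I would form $\mathcal{G}^*$, the closure of $\mathcal{G}$ under disjoint union and substitution. By construction $\mathcal{G}^*$ is hereditary and closed under substitution, and by Theorem \ref{general bound} it is $\chi$-bounded by the non-decreasing function $g(k) = f(k)^k$. Now I would apply Lemma \ref{gluing bound} with $\mathcal{G}^*$ in place of $\mathcal{G}$ and $g$ in place of $f$: this gives that $(\mathcal{G}^*)^\#$ is $\chi$-bounded by $g$.

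To finish, I just need the containment $\mathcal{G}^\# \subseteq (\mathcal{G}^*)^\#$, which is immediate from $\mathcal{G} \subseteq \mathcal{G}^*$ and the fact that $(\mathcal{G}^*)^\#$ is closed under both operations defining the $^\#$ closure. Combining this containment with the previous step yields that $\mathcal{G}^\#$ is $\chi$-bounded by $g(k) = f(k)^k$, as required. The only mildly delicate point is ensuring that $g$ is non-decreasing so that Lemma \ref{gluing bound} is applicable, but this is arranged by the harmless preliminary normalization of $f$; no new combinatorial work is needed beyond what is already packaged in Theorem \ref{general bound} and Lemma \ref{gluing bound}.
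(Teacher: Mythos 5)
Your proof is correct and follows essentially the same route as the paper: reduce to the hereditary case, normalize $f$ so that $g(k)=f(k)^k$ is non-decreasing, apply Theorem \ref{general bound} to get that $\mathcal{G}^*$ is $\chi$-bounded by $g$, and then feed $\mathcal{G}^*$ into Lemma \ref{gluing bound}. The only cosmetic difference is that the paper observes the equality $\mathcal{G}^\# = (\mathcal{G}^*)^\#$ outright rather than the one containment you use, and it spells out the degenerate case where $\mathcal{G}$ has no non-empty graphs, which you instead absorb into a ``without loss of generality'' remark.
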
 
\begin{proof} 
We may assume that $\mathcal{G}$ is hereditary (otherwise, instead of $\mathcal{G}$, we consider the closure of $\mathcal{G}$ under isomorphism and taking induced subgraphs). Now, if $\mathcal{G}$ contains no non-empty graphs, then neither does $\mathcal{G}^\#$, and then $\mathcal{G}^\#$ is $\chi$-bounded by $g$ because $g(0) = 1$ and $\chi(H) = \omega(H) = 0$ for the empty graph $H$. So assume that $\mathcal{G}$ contains at least one non-empty graph; this implies that $f(0) \geq 0$ and that $f(n) \geq 1$ for all $n \in \mathbb{Z}^+$; since $f$ is non-decreasing, this implies that $g$ is non-decreasing. Now, by Theorem \ref{general bound}, $\mathcal{G}^*$ is $\chi$-bounded by $g$. Next, note that $\mathcal{G}^\#$ is the closure of $\mathcal{G}^*$ under substitution and gluing along a clique. Since $\mathcal{G}^*$ closed under substitution and is $\chi$-bounded by the non-decreasing function $g$, Lemma \ref{gluing bound} implies that $\mathcal{G}^\#$ is $\chi$-bounded by $g$. 
\end{proof} 
\begin{prop} \label{gluing polyexp} Let $\mathcal{G}$ be a class of graphs, $\chi$-bounded by a polynomial (respectively: exponential) function $f:\mathbb{N} \rightarrow \mathbb{R}$. Then $\mathcal{G}^\#$ is $\chi$-bounded by some polynomial (respectively: exponential) function $g:\mathbb{N} \rightarrow \mathbb{R}$. 
\end{prop}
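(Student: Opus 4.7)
The plan is to combine the results of Section~\ref{section:substitution} on substitution with Lemma~\ref{gluing bound} on gluing along a clique, mirroring the proof of Proposition~\ref{closure} but using the refined Theorems~\ref{poly} and~\ref{exp} in place of the general Theorem~\ref{general bound}. As usual, I would first reduce to the hereditary case by replacing $\mathcal{G}$ by its closure under isomorphism and taking induced subgraphs; this preserves the hypothesis that $\mathcal{G}$ is $\chi$-bounded by the given polynomial (resp.\ exponential) function $f$.

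Next, I would apply Theorem~\ref{poly} in the polynomial case, or Theorem~\ref{exp} in the exponential case, to obtain a polynomial (resp.\ exponential) $\chi$-bounding function $g$ for $\mathcal{G}^*$. Before invoking Lemma~\ref{gluing bound}, I would note that $g$ as constructed in those theorems is non-decreasing, which is immediate from the explicit forms $g(x) = x^{3A+11}$ and $g(x) = 2^{(c+1)x}$ appearing in the proofs. Since $\mathcal{G}^*$ is hereditary (substitution preserves hereditariness) and closed under substitution by construction, Lemma~\ref{gluing bound} then yields that $(\mathcal{G}^*)^\#$ is $\chi$-bounded by $g$.

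To conclude, I would observe that $(\mathcal{G}^*)^\# = \mathcal{G}^\#$. The inclusion $\mathcal{G}^\# \subseteq (\mathcal{G}^*)^\#$ is immediate from $\mathcal{G} \subseteq \mathcal{G}^*$. For the reverse inclusion, $\mathcal{G}^\#$ contains $\mathcal{G}$ and is closed under substitution (and under disjoint unions, which is just gluing along the empty clique), hence contains $\mathcal{G}^*$; being also closed under gluing along a clique, it contains $(\mathcal{G}^*)^\#$. This gives the desired polynomial (resp.\ exponential) $\chi$-bounding function for $\mathcal{G}^\#$. There is no genuine obstacle in the argument: it is a clean assembly of three prior results, and the only minor point to verify is the monotonicity of the functions produced by Theorems~\ref{poly} and~\ref{exp} so that Lemma~\ref{gluing bound} can be legitimately applied.
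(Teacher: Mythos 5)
Your proposal is correct and matches the paper's intended argument, which simply states that the proof is analogous to that of Proposition~\ref{closure} with Theorems~\ref{poly} and~\ref{exp} replacing Theorem~\ref{general bound}. You have spelled out the details the paper leaves implicit (monotonicity of $g$, the equality $(\mathcal{G}^*)^\# = \mathcal{G}^\#$), and all of them check out.
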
 
\begin{proof} 
The proof is analogous to the proof of Proposition \ref{closure}, with Theorem \ref{poly} and Theorem \ref{exp} being used instead of Theorem \ref{general bound}. 
\end{proof}

\section{Open Questions} \label{section:questions}

Let us say that an operation $O$ defined on the class of graphs {\em preserves $\chi$-boundedness} (respectively: {\em preserves hereditariness}) if for for every $\chi$-bounded (respectively: hereditary) class $\mathcal{G}$, the closure of $\mathcal{G}$ under the operation $O$ is again $\chi$-bounded (respectively: hereditary). This work raises the following natural question. Suppose that some $\chi$-boundedness preserving operations are given. Is the closure of a $\chi$-bounded class with respect to all the operations together $\chi$-bounded? In general, the answer is no. The \emph{Mycielskian} $M(G)$ of a graph $G$ on $\{v_1,\dots,v_n\}$ is defined as follows: start from $G$ and for all $i \in \{1,...,n\}$, add a vertex $w_i$ complete to $N_G(v_i)$ (note that $\{w_1,\dots,w_n\}$ is a stable set in $M(G)$); then add a vertex $w$ complete to $\{w_1,\dots,w_n\}$. It is well known (see~\cite{mycielski:color}) that $\omega(M(G)) = \omega(G)$ and $\chi(M(G)) = \chi(G)+1$ for every graph $G$ that has at least one edge. Now define two operations on graphs: $O_1(G)$ (respectively: $O_2(G)$) is defined to be $M(G)$ if $\chi(G)$ is odd (respectively: even), and to be $G$ otherwise. Clearly, $O_1$ (respectively: $O_2$) preserves $\chi$-boundedness; this follows from the fact that applying $O_1$ (respectively: $O_2$) repeatedly can increase the chromatic number of a graph at most by $1$.  But taken together, $O_1$ and $O_2$ may build triangle-free graphs of arbitrarily large chromatic number: by applying them alternately to the complete graph on two vertices for instance. However, this example may look artificial; perhaps some more ``natural'' kinds of operations, to be defined, have better behavior? Note that, unlike the three operations discussed in this paper (substitution, gluing along a clique, and gluing along a bounded number of vertices), the Mycielskian does not preserve hereditariness. This suggests a candidate for which we have no counterexample:
\begin{question} If $O_1$ and $O_2$ are operations that (individually) preserve hereditariness and $\chi$-boundedness, do $O_1$ and $O_2$ together preserve $\chi$-boundedness? 
\end{question} 
\noindent 
Note that we do not know the answer in the following particular case:
\begin{question} 
Is the closure of a $\chi$-bounded class under substitution and gluing along a bounded number of vertices $\chi$-bounded?
\end{question} 
\noindent 
Are there other operations that in some sense preserve $\chi$-boundedness?  A \emph{star} in a graph $G$ is a set $S \subseteq V_G$ such that some vertex $v \in S$ is complete to $S \smallsetminus \{v\}$.  A \emph{star cutset} of a graph is star whose deletion yields a disconnected graph. Star cutsets are interesting in our context because their introduction by Chv\'atal~\cite{chvatal:starcutset} was the first step in a series of theorems that culminated in the proof of the strong perfect graph conjecture. Also, several classes of graphs that are notoriously difficult to decompose are decomposed with star cutsets or variations on star cutsets: star cutsets are used to decompose even-hole-free graphs (see \cite{dsv:ehf}); skew partitions are used to decompose Berge graphs (see \cite{SPGT}); double star cutsets are used to decompose odd-hole-free graphs (see \cite{conforti.c.v:dstrarcut}). Could it be that some of these decompositions preserve $\chi$-boundedness? If so, the following open question could be a good starting point (and should have a positive answer):
\begin{question} Is there a constant $c$ such that if a graph $G$ is triangle-free and all induced subgraphs of $G$ either are 3-colorable or admit a star cutset, then $G$ is $c$-colorable? 
\end{question}

\end{document}